\documentclass[12pt]{article}
\usepackage{amssymb}
\usepackage{amsfonts}
\usepackage{amsmath}
\usepackage[usenames]{color}
\usepackage{mathrsfs}
\usepackage{amsfonts}
\usepackage{amssymb,amsmath}
\usepackage{CJK}
\usepackage{cite}
\usepackage{cases}
\usepackage{amsthm}

\pagestyle{plain}
\oddsidemargin -25pt
\evensidemargin -25pt
\topmargin -40pt
\textwidth 6.5truein
\textheight 9.35truein
\parskip .01 truein
\baselineskip 6pt

\def\cl{\centerline}

\def\vs{\vspace*}

\def\ni{\noindent}

\numberwithin{equation}{section}
\newtheorem{theo}{Theorem}[section]
\newtheorem{defi}[theo]{Definition}
\newtheorem{coro}[theo]{Corollary}
\newtheorem{lemm}[theo]{Lemma}

\newtheorem{rema}[theo]{Remark}

\newtheorem{remark}[theo]{Remark}

\begin{document}
\begin{center}
\cl{\large\bf \vs{6pt} Isoparametric hypersurfaces in Finsler space forms\,{$^*\,$}}
\footnote {$^*\,$ Project supported by NNSFC (No.11471246, 11971253), AHNSF (No.1608085MA03).
\\\indent\ \ $^\dag\,$ yst419@163.com
}
\cl{ Qun He$^{1}$, Yali Chen$^1$, Songting Yin$^{2,\dag}$, Tingting Ren$^{1}$}

\cl{\small 1. School of Mathematical Sciences, Tongji University, Shanghai
200092, China}
\cl{\small 2. Department of Mathematics and computer science, Tongling University,}
\cl{ Tongling, 244000, China}
\end{center}

{\small
\parskip .005 truein
\baselineskip 3pt \lineskip 3pt

\noindent{{\bf Abstract:}
In this paper, we study isoparametric hypersurfaces in Finsler space forms by investigating focal points, tubes and parallel hypersurfaces of  submanifolds. We prove that the focal submanifolds of isoparametric hypersurfaces are anisotropic-minimal and obtain the Cartan-type formula in a Finsler space form with vanishing reversible torsion, from which we give some classifications on the number of distinct principal curvatures or their multiplicities.
\vs{5pt}

\ni{\bf Key words:}
 Finsler space form, isoparametric hypersurfaces, focal submanifolds, Randers space, principal curvature, anisotropic mean curvature.}

\ni{\it Mathematics Subject Classification (2010):} 53C60, 53C42, 34D23.}
\parskip .001 truein\baselineskip 6pt \lineskip 6pt
\section{Introduction}
In Riemannian geometry, the study on isoparametric hypersurfaces has a long history. Since 1938, E.Cartan began to study the isoparametric hypersurfaces in real space forms with constant sectional curvature $c$ systematically. One of excellent works Cartan did is that he proved if~$k_{1}, k_{2}, \cdots, k_{g}$ are the all distinct principal curvatures, then they satisfy the following formula
\begin{equation}\label{0.0}
\sum_{i\neq j} m_{j}\frac{c+k_{j}k_{i}}{k_{j}-k_{i}}=0,~~~~~~j=1, \cdots, g,
\end{equation}
where~$m_{i}$ is the multiplicity of~$k_{i}$. (\ref{0.0}) is known as Cartan formula, which plays a vital role in the classification of
isoparametric hypersurfaces in space forms. Those in Euclidean and hyperbolic spaces were classified in 1930's~\cite{C,SF,S}. For the most difficult case, those in a unit sphere, were recently completely solved~\cite{C1}. It is a  natural idea to generalize the theories to Finsler geometry.

In Finsler geometry, the conception of isoparametric hypersurfaces has been introduced in~\cite{HYS}. Let~$(N, F)$ be an
$n$-dimensional Finsler manifold. A function~$f$ on~$(N, F)$ is said to be \textit{isoparametric} if there are~two functions $a (t)$ and~$b (t)$ such that
\begin{equation}\label{0.01} \left\{\begin{aligned}
&F(\nabla f)={a}(f),\\
&\Delta f={b}(f),
\end{aligned}\right.
\end{equation}
where~$\nabla f$ denotes the gradient of~$f$, which is defined by means of the Legendre transformation, and~$\Delta f$ is a nonlinear
Finsler-Laplacian of $f$. Each regular level hypersurface of an isoparametric function is called an \emph{isoparametric hypersurface} (see Section 2.1 and Section 2.3 for details). For simplicity, we mainly consider $\hat{\Delta} f$ in this paper, which is the Laplacian of $\hat{g}=g_{\nabla f}$ and
independent from the choice of volume elements.

The flag curvature of a Finsler manifold is a natural generalization of the sectional curvature in Riemannian geometry. Similarly, we can call a complete and simply connected Finsler manifold with constant flag curvature a \emph{Finsler space form}. If a Finsler manifold is only \emph{forward} (resp. \emph{backward}) complete, we call it \emph{forward} (resp. \emph{backward}) complete \emph{Finsler space form}. In this paper, we denote an $n$-dimensional Finsler space form with constant flag curvature~$c$ by ${N}(c)$ and denote a forward (resp. backward) complete Finsler space form by $\overrightarrow{N}(c)$ (resp. $\overleftarrow{N}(c)$) for the sake of simplicity. Studying and classifying isoparametric hypersurfaces in Finsler space forms are interesting problems naturally generalized from Riemannian geometry. Unlike the Riemannian geometry, there are infinitely many Finsler space forms, which are not isometric or even not homothetic to each other, and they are far from being completely classified.  People know very little about Finsler space forms except for some special cases.  Even for those known Finsler space forms with non-zero flag curvature, they may be very complicated, like the examples constructed on spheres~\cite{B} by R. Bryant. Therefore, the classification of isoparametric hypersurfaces in Finsler space forms is an arduous and complex task.

In~\cite{HYS,GM,HYS1,X,DH}, the isoparametric hypersurfaces in some special Finsler space forms including in  Minkowski spaces (with zero flag curvature), Funk type spaces (with negative constant flag curvature) had been completely classified. For ambient spaces with positive constant flag curvatures, Xu \cite{X} studied a special class of isoparametric hypersurfaces in the (non-Riemannian) Randers sphere.
In this paper, we consider isoparametric hypersurfaces in more general Finsler space forms.

From \cite{HYS}, we know that in a Finsler space with constant flag curvature,
 all principal curvatures of an isoparametric hypersurface are also constant. Therefore, the first step to classify isoparametric hypersurfaces is to derive Cartan type formula. Unfortunately, it is very difficult to derive Cartan-type formula by Cartan's way in general Finsler space forms.  As an alternative to Cartan's  measure, one can also derive (\ref{0.0}) by studying tubes, parallel hypersurfaces and focal submanifolds of isoparametric hypersurfaces in $\mathbb{S}^{n}$ (see \cite{CR}), which is seen as an embedded hypersurface of $\mathbb{R}^{n+1}$. But there are still some barriers in this way because in general, a Finsler space form may not be isometrically embedded in a Minkowski space. So many of research methods in Riemannian geometry are no longer applicable.

  In this paper, we will try to use new ideas to derive Cartan-type formula.
Firstly, in Section 3, we introduce focal points, tubes and parallel hypersurfaces of anisotropic submanifolds in a Finsler space form and study their principal curvatures by using the theories of geodesics and Jacobi fields. In Section 4,
by using the theories of focal manifolds, we derive Cartan-type formula and get some classifications of isoparametric hypersurfaces in~$\overrightarrow{N}(c)$ with vanishing \emph{reversible torsion} $\tau$ or $\tau'$, which are defined by (\ref{3.19}) and (\ref{3.191}) respectively.

\begin{theo}\label{thm00}$\mathbf{(General~Cartan~formula)}$\label{thm4} Let $M\hookrightarrow N (c)$ be a connected isoparametric hypersurface with $g$ distinct principal curvatures
$\lambda_{1},~\lambda_{2},~\cdots, ~\lambda_{g},$
with respective multiplicities of $m_{i}$. If $\tau=0$ or $\tau'=0$, then for each $i$, $1\leq i\leq g$, there exists a set of positive constants $k_{j}\geq m_j$ such that the following formula holds
\begin{equation}\label{c1}
\sum_{j\neq i}k_{j}\frac{c+\lambda_{i}\lambda_{j}}{\lambda_{i}-\lambda_{j}}=0.
\end{equation}
Especially, $k_{j}=m_j$ when $\tau=0$. In this case, each focal submanifold~of~$M$ is anisotropic-minimal.
\end{theo}

\begin{theo} \label{thm0}
Let~$M$ be a connected isoparametric hypersurface in a Finsler space form~$\overrightarrow{N}(c)$ with  vanishing reversible torsion $\tau$ or $\tau'$.  \\
$(1)$~If $c\leq0,$ then $M$ has two distinct principal curvatures at most; \\
$(2)$~If $c>0$ and $\tau=0$, then the distinct principal curvatures $\lambda_{i}=\sqrt{c} \cot \theta_{i}$ satisfy
\begin{align}
\theta_{i}=\theta_{1}+ (i-1)\frac{\pi}{g},~~~~ 1\leq i \leq g, \label{3.201}
\end{align}
where $0<\theta_{1}<\cdots< \theta_{g}< \pi$ and their multiplicities satisfy $m_{i}=m_{i+2}$ (subscripts mod $g$). Thus, all of the principal curvatures have the same multiplicity if $g$ is odd and there are at most two distinct multiplicities if $g$ is even. Besides, for any $x\in M$, there are $2g$ focal points of $(M,x)$ at least along every normal geodesic to $M$  through $x$, and they are evenly distributed at intervals of length $\frac{\pi}{\sqrt{c}g}$.
\end{theo}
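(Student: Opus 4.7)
The plan is to deduce Theorem \ref{thm0} from the Cartan-type identity (\ref{0.0}) established in Theorem \ref{thm01}, combined with the Jacobi-field description of parallel hypersurfaces and focal submanifolds developed in the preceding section. The argument is the Finsler adaptation of Münzner's classical scheme.

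First I would treat the case $c>0$. Substituting $\la_i=\sqrt{c}\cot\theta_i$ with $\theta_i\in(0,\pi)$ in (\ref{0.0}) and using the addition formula
\begin{equation*}
\cot(\theta_j-\theta_i)=\frac{1+\cot\theta_i\cot\theta_j}{\cot\theta_i-\cot\theta_j},
\end{equation*}
a short computation rewrites (\ref{0.0}) as the angular Cartan relation
\begin{equation*}
\sum_{j\neq i}m_j\cot(\theta_j-\theta_i)=0,\qquad 1\le i\le g.
\end{equation*}
Moreover, the parallel-hypersurface formula from the preceding section gives that the hypersurface $M_t$ at signed normal distance $t$ has principal curvatures $\sqrt{c}\cot(\theta_i-\sqrt{c}\,t)$, so the multiset $\{\theta_1,\ldots,\theta_g\}$, viewed in $\mathbb{R}/\pi\Z$, is translated rigidly by $-\sqrt{c}\,t$. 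A focal submanifold $S_k$ is met precisely when some $\theta_k-\sqrt{c}\,t\equiv 0\pmod{\pi}$, and the two tubes of equal radius around $S_k$ are again isoparametric hypersurfaces with the same angle multiset.

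The Münzner-type argument then proceeds as follows. For each $k$, passing through a tube of suitable radius around $S_k$ and emerging on the other side produces an isoparametric parallel hypersurface whose angle set has been shifted by $\theta_k-\theta_1\pmod{\pi}$; since this hypersurface has the same invariant angular data, the translation $\theta\mapsto\theta+(\theta_k-\theta_1)\pmod{\pi}$ must permute $\{\theta_1,\ldots,\theta_g\}$. Letting $k$ range over $1,\ldots,g$ we obtain $g$ such translations of $\mathbb{R}/\pi\Z$ fixing a $g$-element set, which forces that set to be a single orbit of a cyclic subgroup of order $g$ --- i.e., an arithmetic progression with step $\pi/g$, proving (\ref{3.201}). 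Passing from one side of $S_k$ to the other swaps the two neighboring multiplicities $m_{k-1}$ and $m_{k+1}$, so cyclic invariance yields $m_{k-1}=m_{k+1}$, i.e., $m_i=m_{i+2}$ (mod $g$); the statement on distinct multiplicities is immediate. For the focal-point count, the same parallel-hypersurface formula shows that focal points of $(M,x)$ along a unit-speed normal geodesic are exactly the $t$ with $\sqrt{c}\,t\equiv\theta_i\pmod{\pi}$ for some $i$; by the equal spacing these form an arithmetic progression of step $\pi/(\sqrt{c}\,g)$, giving at least $2g$ of them in every interval of length $2\pi/\sqrt{c}$.

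For $c\le0$ the analogous substitution ($\la_i=\sqrt{-c}\coth\theta_i$ when $c<0$, direct inspection when $c=0$) reduces (\ref{0.0}) to $\sum_{j\neq i}m_j\coth(\theta_j-\theta_i)=0$, or its obvious $c=0$ analogue. Since $\coth$ is monotone and has the sign of its argument, ordering the $\theta_i$ and taking $i$ to be the extremal index produces a sum of terms of a single sign, which can vanish only when $g\le 2$. The main obstacle is the Finsler-geometric step that powers the Münzner argument, namely showing that the two parallel tubes on opposite sides of every focal submanifold carry the same principal-curvature data. This is precisely where vanishing reversible torsion enters: it ensures that reversing the unit normal preserves the shape-operator data up to the expected sign, so that the two-sided tubes around each $S_k$ are genuine isoparametric hypersurfaces whose angle sets coincide. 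Once this Finsler-specific compatibility is secured, the rest of the argument proceeds exactly as in the Riemannian case.
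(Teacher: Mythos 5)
Your overall plan---feed $\lambda_i=\sqrt{c}\cot\theta_i$ into the Cartan-type formula and then run the classical Cartan/M\"unzner combinatorics, with vanishing reversible torsion supplying the two-sided normal compatibility---is the same reduction the paper makes (its proof of this theorem is essentially a citation of Cartan and of Cecil--Ryan once Theorem \ref{thm4} is in place). However, two of the steps you actually write down do not hold up.

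For $c<0$, the substitution $\lambda_i=\sqrt{-c}\coth\theta_i$ is only possible when $|\lambda_i|>\sqrt{-c}$; principal curvatures with $|\lambda_i|\le\sqrt{-c}$ (for instance $\lambda_i=0$) are not of this form, so the identity $\sum_{j\ne i}m_j\coth(\theta_j-\theta_i)=0$ is simply not available in general. Moreover, if it were, your single-sign argument would force every term to vanish (since $|\coth|\ge 1$) and hence give $g\le 1$, not $g\le 2$---a symptom that the reduction is the wrong one. The correct route here is the direct sign analysis of $\sum_{j\ne i}m_j\frac{c+\lambda_i\lambda_j}{\lambda_i-\lambda_j}=0$ with a suitably extremal choice of $\lambda_i$, exactly as in Cartan's original argument; this is what the paper invokes.

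For $c>0$, the pivotal combinatorial input is a reflection symmetry, not the translation invariance you assert, and your justification of the latter is circular: distinct parallel hypersurfaces $M_t$ do \emph{not} share the same angle set (their angle sets are the various shifts $\{\theta_i-\sqrt{c}\,t\}$), so nothing forces the shift by $\theta_k-\theta_1$ to permute $\{\theta_1,\dots,\theta_g\}$ a priori. What the machinery already established in the paper actually gives is this: by Lemma \ref{lem4.2} the eigenvalue multiset of $A_\eta$ on the focal submanifold $S_k$ is $\{\sqrt{c}\cot(\theta_j-\theta_k)\}_{j\ne k}$ with multiplicities $m_j$ for \emph{every} unit normal $\eta$, and vanishing reversible torsion gives $A_{\eta_-}=-A_\eta$ (as in the proof of Theorem \ref{thm3}); hence this multiset is invariant under negation, i.e.\ the multiset $\{\theta_j-\theta_k \bmod \pi\}_{j\ne k}$ is invariant under $\alpha\mapsto-\alpha$ for each $k$. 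This reflection about each $\theta_k$ swaps the nearest neighbours $\theta_{k-1}$ and $\theta_{k+1}$, yielding $\theta_{k+1}-\theta_k=\theta_k-\theta_{k-1}$ and $m_{k+1}=m_{k-1}$ directly, from which \eqref{3.201} and the focal-point count follow. You do name this mechanism in your closing paragraph, but the translation step your group-theoretic argument relies on is precisely what remains unproved (and composing two of the reflections yields translation by $2(\theta_k-\theta_1)$, not $\theta_k-\theta_1$), so the $c>0$ case needs to be rewritten around the reflection symmetry.
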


\begin{remark} From (\ref{3.19}) and (\ref{3.191}), it is obvious that $\tau=\tau'=0$ always holds if~$F$ is reversible~and~$\tau'=0$ always holds if $F$ is Berwald. Therefore, Theorem \ref{thm00} and Theorem \ref{thm0} not only generalize the corresponding results in Riemannian geometry, but also generalize the corresponding results in \cite{HYS,GM}.
\end{remark}
Finally, in Section 5, we will prove that if $F$ is a Randers metric and its navigation vector field is Killing, then~$\tau=0$ always holds. Randers metrics are nonreversible and play a fundamental role in Finsler geometry. There exist a large number of nontrivial Randers space forms. Until now, the most known examples of Finsler space forms with non-zero flag curvature are Randers space forms. From above, we have
\begin{coro} \label{thm02}
In a Minkowski space or a Randers space form whose navigation vector field is Killing, the conclusions of Theorem \ref{thm00} and Theorem \ref{thm0} always hold not only for isoparametric hypersurfaces, but also for $d\mu_{BH}$-isoparametric hypersurfaces.
\end{coro}

\section{Preliminaries}
\subsection{Finsler Laplacians}
Let $(N,F)$ be an $n$-dimensional oriented smooth Finsler manifold and $TN$ be the tangent bundle over $N$ with local coordinates
$(x,y)$, where $x=(x^1,\cdots ,x^n)$ and $y=(y^1,\cdots ,y^n)$.
Here and from now on, we will use the following convention of index ranges unless other stated:
$$1\leq i, j,\cdots \leq n ;~~~~~~~1\leq a, b, \cdots \leq m< n;$$
$$m+1\leq \alpha, \beta,\cdots  \leq n.$$
The fundamental form $g$ of $(N,F)$ is given by
$$g:=g_{ij}(x,y)dx^{i} \otimes dx^{j}, ~~~~~~~g_{ij}(x,y)=\frac{1}{2}[F^{2}] _{y^{i}y^{j}}.  $$
The curve $\gamma(t)$ is called a geodesic in~$N$, if its local coordinates $(x^i(t))$ satisfy
\begin{align*}
\ddot{x}^{i}(t)+2G^{i}\left(x(t),\dot{x}(t)\right)=0,
\end{align*}
where
\begin{align*}
G^{i}=\frac{1}{4}g^{il}\left\{[F^{2}]_{x^{k}y^{l}}y^{k}-[F^{2}]_{x^{l}}\right\}
\end{align*}
 are the geodesic coefficients of $(N,F)$. Using the geodesic coefficients, we can define a torsion-free connection $\nabla$ on the pull-back bundle $\pi^{\ast}TN$ by (\cite {BCS}, p.39)
\begin{align*}\nabla
\frac{\partial}{\partial x^i}=\omega_{i}^{k}\frac{\partial}{\partial
x^{k}}=\Gamma^{k}_{ij}dx^j\otimes\frac{\partial}{\partial x^{k}},~~
{\Gamma}^{k}_{ij}=\frac{\partial G^{k}}{\partial y^{i}\partial y^{j}}.
\end{align*} It is called \emph{Berwald connection} and satisfies
\begin{align}
  dg_{ij}&-g_{ik} {\omega^{k}_{j}}-g_{kj} {\omega^{k}_{i}}=2FC_{ijk}\delta y^{k}-2L_{ijk}dx^{k},\label{eq.con2}
\end{align}
where $\delta y^{i}:=\frac{1}{F}(dy^{i}+N^{i}_{j}dx^{j})$, $N_j^i:=\frac{\partial G^i}{\partial y^j}=\Gamma^{i}_{jk}y^k$, $C_{ijk}:=\frac{1}{2}\frac{\partial g_{ij}}{\partial y^k}$ is the \emph{Cartan tensor} and $L_{ijk}:=F\dot {C}_{ijk}={C}_{ijk|l}y^l$ is the
\emph{Landsberg curvature}. The curvature 2-forms of the
Berwald connection are
\begin{eqnarray*}
d{ \omega}^{i}_{j}-{ \omega}^{k}_{j}\wedge { \omega}^{i}_{k}:=\frac{1}{2}{ R}^{i}_{j~kl}dx^{k}\wedge
dx^{l}+{ P}^{i}_{j~kl}dx^{k}\wedge\delta y^{l}.
\end{eqnarray*}
The {\it flag curvature tensor} can be written as
$$R^i_{~k}:=\ell^j{ R}^{i}_{j~kl}\ell^l,\qquad R_{jk}=g_{ij}R^i_{~k},$$
where $\ell=\frac{y}{F}$ is the vector field dual to the Hilbert form $\omega=[F]_{y^i}dx^i$.  For a vector
$V=V^{i}\frac{\partial}{\partial x^i}$ satisfying $g_{ij}V^iV^j=1$ and $g_{ij}y^iV^j=0$, the \emph{flag curvature} of $(N,F)$ is defined by
\begin{align*}K(y;V)=R_{ij}V^{i}V^{j}. \end{align*}

Let $X=X^{i}\frac{\partial}{\partial x^{i}}$ be a differentiable vector field. Then the \emph{covariant derivatives} of $X$ along
$v=v^i\frac{\partial}{\partial x^{i}}\in T_{x}N$ with respect to a reference  vector $w\in T_{x}N\backslash 0$ for  the Berwald connection is defined by
\begin{align}
{ D}^{w}_{v}X(x):&=\left\{v^{j}\frac{\partial X^{i}}{\partial x^{j}}(x)+{ \Gamma}^{i}_{jk}(w)v^{j}X^{k}(x)\right\}\frac
{\partial}{\partial x^{i}}.\label{Z1}\end{align}

Let ${\mathcal L}:TN \to T^{\ast}N$ denote the \emph{Legendre transformation}, which satisfies ${\mathcal L}(\lambda
y)=\lambda {\mathcal L}(y)$ for all $\lambda>0$ and~$y\in TN$. Moreover, we know from {\cite{SZ}} (p.38-39) that
\begin{align}\mathcal L(y)&=F(y)[F]_{y^{i}}(y)dx^i,~~\forall y\in TN\setminus \{0\},~~\mathcal L(0)=0,\nonumber\\
\mathcal L^{-1}(\xi)&=F^*(\xi)[F^*]_{\xi_{i}}(\xi)\frac{\partial}{\partial x^i},~~\forall \xi\in T^*N\setminus \{0\},~~\mathcal L^{-1}(0)=0,\label{Z01}\end{align}
 where $F^*$ is the dual metric of $F$.
For a smooth function $f:N \to \mathbb{R}$, the \emph{gradient vector} of $f$ at $x$ is defined as $\nabla f(x):={\mathcal
L}^{-1}(df(x))\in T_{x}N$.
Set $N_{f}:=\{x\in N|df(x)\neq 0\}$ and $\nabla^{2}f(x):=D^{\nabla f}(\nabla f)(x)$ for $x\in N_{f}$. We define the \emph{Laplacian} of~$f$
by
\begin{equation}\label{l1}
\hat{\Delta} f=\textmd{tr}_{g_{_{\nabla f}}}(\nabla^{2}f).
\end{equation}
And the \emph{Laplacian} of~$f$ with respect to the volume form~$d\mu=\sigma(x)dx=\sigma(x)dx^{1}\wedge dx^{2}\wedge\cdots\wedge dx^{n}$ can be represented as \begin{align}\Delta_{\sigma} f=\textmd{div}_{\sigma}(\nabla f)=\frac{1}{\sigma}\frac{\partial}{\partial x^{i}}(\sigma g^{ij}(\nabla f)f_{j})=\hat{\Delta} f-S(\nabla f),
\label{l2}\end{align}
where
\begin{align}\label{1.4.19}
S(x,y)=\frac{\partial G^{i}}{\partial y^{i}}-y^{i}\frac{\partial}{\partial x^{i}}(\ln \sigma(x))
\end{align}
is the \emph{$\mathbf{S}$-curvature}.
\subsection{Reversible torsions}
In general, $\mathcal L^{-1}(-\xi)\neq-\mathcal L^{-1}(\xi)$.
So for any $\eta\in S_xN=\{X\in T_xN~|~F(X)=1\}$, we denote $$\eta_{-}=\frac{{\mathcal L}^{-1}(-{\mathcal L}\eta)}{F^{*}(-{\mathcal L}\eta)}.$$
If $F$ is reversible, then $\eta_{-}=-\eta$.
\begin{defi} \label{def0} Let ${\eta}$ be a unit vector field in the neighbourhood of $x\in N$ and $X\in T_{x}N$ satisfy $g_\eta(\eta,X)=0$.
Set
\begin{align}
\tau_{\eta}(X):&=D^{\eta}_{X}\eta+D^{\eta_{-}}_{X}\eta_{-},\label{3.19}\\
\tau'_{\eta}(X):&=D^{\eta}_{X}\eta-D^{\eta_{-}}_{X}\eta.\label{3.191}
\end{align}

We call $\tau_{\eta}$ and $\tau'_{\eta}$ the \textbf{first class} and
\textbf{second class reversible torsion} of $\eta$ in $(N,F)$ respectively.
\end{defi}

\subsection{Isoparametric hypersurfaces}

Let $f$ be a non-constant $C^1$ function defined on a Finsler manifold $(N,F,d\mu)$ and smooth on $N_f$. Set $J=f(N_f)$. The function $f$ is
said to be \emph{$d\mu$-isoparametric} (resp. \emph{isoparametric}), if there exist a smooth function ${a} (t)$ and a continuous function $b (t)$ on $J$ such that \eqref{0.01} holds for $\Delta {f}=\Delta_{\sigma} f$ (resp. $\Delta {f}=\hat{\Delta} f$), which is defined by \eqref{l2} (resp. \eqref{l1}).
All the regular level surfaces ${M}_t = {f}^{-1}(t)$ are named an \emph{($d\mu$-)isoparametric family}, each of which is called an \emph{($d\mu$-)isoparametric hypersurface} on $(N,F,d\mu)$.
\begin{rema}\label{111} From \cite{HYS}, we know that if $(N,F,d\mu)$ has constant $\mathbf{S}$-curvature, then~$f$~is isoparametric if and only if it is $d\mu$-isoparametric. Therefore, as long as the condition of constant $\mathbf{S}$-curvature is added, all conclusions in this paper are still valid for $d\mu$-)isoparametric hypersurface.
\end{rema}
\section{Anisotropic submanifolds of a Finsler manifold}
\subsection{Anisotropic mean curvature}

Let $(N,F)$ be an $n$-dimensional Finsler manifold and $\phi: M\to(N, F)$ be an $m$-dimensional immersion. For simplicity, we will denote $d\phi X$ by $X$. Let
$$\mathcal{V}(M)=\{(x,\xi)~|~x\in M,\xi\in T_x^{*}N,\xi (X)=0,\forall X\in T_xM\},$$
which is called the
{\it normal bundle} of $\phi$ or $M$. Let
$$ \mathcal{N}M={\mathcal L}^{-1}(\mathcal{V}(M))=\{(x, \textbf{n})|~x\in \phi(M), \textbf{n}={\mathcal L}^{-1}(\xi), \xi \in \mathcal{V}_{x}(M)\}.$$
Then $\mathcal{N}M\subset TN$. For any $\textbf{n}\in \mathcal{N}M, \lambda>0$ and $ X\in \Gamma(TM)$, we have $\lambda\textbf{n}\in \mathcal{N}M$ and $g_{\textbf{n}}(\textbf{n},X)=0$. So we call $\textbf{n}\in \mathcal{N}M$ \emph{the normal vector} of $M$. We also call $\mathcal{N}M$ the
{\it normal bundle} of $\phi$ or $M$. But in general, it is not a vector bundle. We call $\{(M,g_{\textbf{n}})|\textbf{n}\in\mathcal{N}M\}$ an \emph{anisotropic submanifold} of $(N,F)$ to distinguish it from an isometric immersion submanifold $(M,\phi^*F)$.

Moreover, we denote the {\it unit normal bundle} by
$$\mathcal{V}^0(M)=\{\nu\in \mathcal{V}(M)|F^*(\nu)=1 \},$$
$$ \mathcal{N}^0M=\{\textbf{n}\in \mathcal{N}M|~F(\textbf{n})=1\}={\mathcal L}^{-1}(\mathcal{V}^0(M)).$$
For any $\nu\in \mathcal{V}^0(M)$, set $\nu_{+}=\nu$ and $\nu_{-}=\frac{-\nu}{F^*(-\nu)}$. Then $\nu_{-}\in \mathcal{V}^0(M)$ and $\textbf{n}_{\pm}={\mathcal L}^{-1}\nu_{\pm}\in \mathcal{N}^0(M)$.
 For any  $X\in T_xM$ and $\textbf{n}\in \mathcal{N}^0(M)$,  define the \emph{shape operator} ${A}_{\textbf{n}}:T_xM\rightarrow T_xM$ by the following \emph{Weingarten formula}
\begin{equation}\label{0.1}{A}_{\textbf{n}}(X)=-\left(D^{\textbf{n}}_{X}\textbf{n}\right)^{\top}_{g_{\textbf{n}}},
\end{equation}
where $D$ is the Berwald covariant derivative defined by (\ref{Z1}). Noticed that $A_\textbf{n}(X)$ does not depend on the extension of $\textbf{n}$. In fact,
for any two extensions  $\textbf{n}_1$ and $\textbf{n}_2$ of \textbf{n} and any two vectors $X,Y\in T_xM$,
\begin{equation*}
g_{\textbf{n}}({A}_{\textbf{n}_1}(X),Y)=-g_{\textbf{n}_1}({D}_X^{\textbf{n}_1}\textbf{n}_1,\tilde{Y})=g_{\textbf{n}_1}({\textbf{n}_1},{D}_X^{\textbf{n}_1}\tilde{Y})
=g_{\textbf{n}_2}({\textbf{n}_2},{D}_X^{\textbf{n}_2}\tilde{Y})=g_{\textbf{n}}({A}_{\textbf{n}_2}(X),Y),
\end{equation*}
where $\tilde{Y}$ is a extension of $Y$. Moreover, it is easy to show that
\begin{equation*}g_{\textbf{n}}({A}_{\textbf{n}}(X),Y)
=g_{\textbf{n}}(X,{A}_{\textbf{n}}(Y)),~~~~\forall X,Y\in T_xM.
\end{equation*}
The eigenvalues of ${A}_{\textbf{n}}$,
$\lambda_1,\lambda_2,\cdots,\lambda_{m}$, and $\hat{H}_{\textbf{n}}=\sum_{i=1}^{m}\lambda_{i}$ are called the \emph{ principal
curvatures} and the \emph{anisotropic mean curvature} with respect to $\textbf{n}$, respectively. If $\lambda_1=\lambda_2=\cdots=\lambda_{m}$ for any $\textbf{n}\in \mathcal{N}M$, we call $M$ \emph{anisotropic-totally umbilic}. If $\hat{H}_{\textbf{n}}=0$ for any $\textbf{n}\in \mathcal{N}M$, we call $M$ an \emph{anisotropic-minimal submanifold} of $(N,F)$.

 \subsection{Anisotropic hypersurfaces in Finsler space form}

 Let $(N,F)$ be an $n$-dimensional Finsler manifold and $\phi:M\to N$ be an embedded hypersurface of $(N,F)$. For any $x\in M$, there exist exactly two unit\emph{ normal vectors} $\textbf{n}$ and $\textbf{n}_{-}$.
Let $\textbf{n}$ be a given unit normal vector of~$M$ and set $\hat g=\phi^*g_{\textbf{n}}$. We call $(M,\hat g)$ an oriented \emph{anisotropic hypersurface}. In this paper, all the submanifolds and hypersurfaces are anisotropic, which will be no longer declared for simplicity's sake.

 From (\ref{0.1}) and \cite{HYS1}, we have the following \emph{Gauss-Weingarten formulas} with respect to $g_{\textbf{n}}$ for the Berwald connection
\begin{align}\label{1.1}{ D}^{\textbf{n}}_{X}Y&={ {\hat{\nabla}}}_{X}Y+\hat{h}(X,Y)\textbf{n},\\
D^{\textbf{n}}_{X}\textbf{n}&=-{A}_{\textbf{n}}X,~~~~~\quad X,~Y\in \Gamma(TM) .\label{1.2}\end{align}
Here
\begin{equation}\label{1.3}\hat{h}(X,Y):=g_{\textbf{n}}(\textbf{n},{ D}^{\textbf{n}}_{X}Y)=\hat {g}({A}_{\textbf{n}}X,Y)),\end{equation}
which is called the \emph{second fundamental form}, and
 ${ \hat{\nabla}}$ is a torsion-free linear connection on $M$ satisfying (\cite{HYS1})
\begin{eqnarray}\label{1.4}
({ \hat{\nabla}}_X\hat g)(Y,Z)=-2C_{\textbf{n}}({A}_{\textbf{n}}X,Y,Z)-2L_{\textbf{n}}(X,Y,Z),~~~~~~~X,Y,Z\in \Gamma(TM)
\end{eqnarray}
where $C_{\textbf{n}}$ and $L_{\textbf{n}}$ are the Cartan tensor and the
Landsberg curvature, respectively, with $y={\textbf{n}}$.

\begin{lemm}  For the induced connection ${ \hat{\nabla}}$ on hypersurfaces of a Finsler manifold with constant flag curvature, we have
\begin{align}
({ \hat{\nabla}}_X{A}_{\textbf{n}})Y=({ \hat{\nabla}}_Y{A}_{\textbf{n}})X,~~~~~~X,Y\in \Gamma(TM),\label{1.15}
\end{align}
where $
({ \hat{\nabla}}_X{A}_{\textbf{n}})Y={ \hat{\nabla}}_X({A}_{\textbf{n}}Y)-{A}_{\textbf{n}}({ \hat{\nabla}}_XY).
$
\end{lemm}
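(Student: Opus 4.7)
The plan is to derive the identity from a commutator of two Berwald covariant derivatives of $\textbf{n}$, exactly in the spirit of the Riemannian Codazzi equation, and then use the constant-flag-curvature hypothesis to kill the curvature term that would normally obstruct symmetry.

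First I would start from the Weingarten formula (\ref{1.2}), namely $D^{\textbf{n}}_Y\textbf{n}=-A_{\textbf{n}}Y$, and differentiate again. Since $A_{\textbf{n}}Y\in\Gamma(TM)$, the Gauss formula (\ref{1.1}) applies and yields
\begin{equation*}
D^{\textbf{n}}_X D^{\textbf{n}}_Y\textbf{n}
=-D^{\textbf{n}}_X(A_{\textbf{n}}Y)
=-\hat{\nabla}_X(A_{\textbf{n}}Y)-\hat{h}(X,A_{\textbf{n}}Y)\textbf{n}.
\end{equation*}
Symmetrizing in $(X,Y)$, subtracting, and using $D^{\textbf{n}}_{[X,Y]}\textbf{n}=-A_{\textbf{n}}[X,Y]$ together with the torsion-freeness of $\hat{\nabla}$ (so that $[X,Y]=\hat{\nabla}_XY-\hat{\nabla}_YX$), I would split the commutator into a tangential piece and a normal piece. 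The normal piece is $-[\hat{h}(X,A_{\textbf{n}}Y)-\hat{h}(Y,A_{\textbf{n}}X)]\textbf{n}$, which vanishes because (\ref{1.3}) combined with the self-adjointness $g_{\textbf{n}}(A_{\textbf{n}}X,Y)=g_{\textbf{n}}(X,A_{\textbf{n}}Y)$ established just before the lemma makes $\hat{h}(X,A_{\textbf{n}}Y)$ symmetric in $X,Y$. The tangential piece, after cancelling the $A_{\textbf{n}}\hat{\nabla}_\bullet$ terms against the contributions from $[X,Y]$, collapses precisely to $-(\hat{\nabla}_XA_{\textbf{n}})Y+(\hat{\nabla}_YA_{\textbf{n}})X$.

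Next I would identify the left-hand commutator $(D^{\textbf{n}}_XD^{\textbf{n}}_Y-D^{\textbf{n}}_YD^{\textbf{n}}_X-D^{\textbf{n}}_{[X,Y]})\textbf{n}$ with the Berwald $hh$-curvature operator applied to $\textbf{n}$ along the flag $\textbf{n}$, i.e.\ with $R^i_{j~kl}(\textbf{n})\textbf{n}^jX^kY^l$. Because the reference vector and the differentiated section coincide, the vertical $P$-correction in the curvature 2-form of the Berwald connection plays no role, and this is exactly the flag curvature object $R^i_{~k}(\textbf{n})$ contracted against $(X,Y)$. Invoking the constant flag curvature hypothesis, this operator reduces to
\begin{equation*}
c\bigl[g_{\textbf{n}}(\textbf{n},Y)X-g_{\textbf{n}}(\textbf{n},X)Y\bigr],
\end{equation*}
which vanishes because $X,Y\in T_xM$ are $g_{\textbf{n}}$-orthogonal to the normal $\textbf{n}$. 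Equating the two expressions gives the Codazzi-type identity (\ref{1.15}).

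The main obstacle I anticipate is the curvature step: in Finsler geometry the Berwald reference vector is itself a vector field varying over $M$, so one must make sure that differentiating with reference $\textbf{n}$ twice really produces the $hh$-curvature tensor and not a mixture involving the Landsberg or $P$-curvature, and that the constant flag curvature formula above is the correct contraction. Once this is justified (it is standard because the contracted reference $\textbf{n}^j$ against $R^i_{j~kl}$ recovers the flag curvature operator, which is what ``constant flag curvature'' directly controls), the tangential equality is immediate and the proof is finished. The non-metricity of $\hat{\nabla}$ recorded in (\ref{1.4}) does not intrude, since the computation takes place entirely at the level of the Berwald connection and the symmetry we need about $A_{\textbf{n}}$ has already been proved.
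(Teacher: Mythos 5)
Your argument is correct, but it takes a genuinely different route from the paper's. The paper never recomputes a commutator of Berwald derivatives: it quotes the ready-made Codazzi equation (\ref{1.17}) from \cite{HYS1}, written in terms of $\hat{\nabla}\hat h$ with explicit Landsberg correction terms, kills its left-hand side using (\ref{1.8}), and then converts $(\hat{\nabla}_X\hat h)(Y,Z)$ into $\hat g((\hat{\nabla}_X A_{\textbf{n}})Y,Z)$ by means of the non-metricity identity (\ref{1.4}); the Cartan-tensor terms cancel by the total symmetry of $C_{\textbf{n}}$ and the remaining Landsberg terms cancel against those already present in (\ref{1.17}). You instead differentiate the Weingarten relation $D^{\textbf{n}}_Y\textbf{n}=-A_{\textbf{n}}Y$ once more and identify the antisymmetrized second derivative with the $hh$-curvature acting on $\textbf{n}$, which vanishes on tangential arguments by (\ref{1.8}); this bypasses (\ref{1.17}) and (\ref{1.4}) entirely and never sees the Cartan or Landsberg tensors, at the price of concentrating all the Finslerian delicacy in the one step you flag, namely the Ricci identity $D^{\textbf{n}}_XD^{\textbf{n}}_Y\textbf{n}-D^{\textbf{n}}_YD^{\textbf{n}}_X\textbf{n}-D^{\textbf{n}}_{[X,Y]}\textbf{n}=R_{\textbf{n}}(X,Y)\textbf{n}$ with a reference vector field varying over $M$. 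Your justification of that step is the right one: the $hv$-correction carries $\partial\Gamma^i_{jk}/\partial y^l$ contracted with $\textbf{n}^k$, and since $\Gamma^i_{jk}=\partial^2G^i/\partial y^j\partial y^k$ for the Berwald connection this contraction vanishes by homogeneity (equivalently $P^i_{j\,kl}y^j=0$), so only the flag-curvature part survives. Your treatment of the normal piece is also sound, since $D^{\textbf{n}}_Y\textbf{n}$ has no normal component (this is the content of (\ref{1.2}), not merely of (\ref{0.1})) and $\hat h(X,A_{\textbf{n}}Y)=\hat g(A_{\textbf{n}}X,A_{\textbf{n}}Y)$ is symmetric by (\ref{1.3}). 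What your route buys is independence from the imported Codazzi equation; what the paper's route buys is that the varying-reference-vector bookkeeping has already been done once and for all in establishing (\ref{1.17}).
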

\proof
Recall that the Codazzi equation of hypersurfaces of a general Finsler manifold is (\cite{HYS1})
\begin{align}
 g_{\bf{n}}({ R}_{\bf{n}}( X , Y ) Z,\bf{n})&=({ \hat{\nabla}}_X\hat{h})(Y,Z)-({ \hat{\nabla}}_Y\hat{h})(X,Z)\nonumber\\
&+2L_{\bf{n}}(X,{A}_{\bf{n}}(Y),Z)-2L_{\bf{n}}(Y ,{A}_{\bf{n}}(X),Z),\label{1.17}
\end{align}
where\begin{align}({ \hat{\nabla}}_X\hat h)(Y,Z):=X\hat h(Y,Z)-\hat h({ \hat{\nabla}}_XY,Z)-\hat h(Y,{ \hat{\nabla}}_XZ).
\label{1.18}\end{align}
 From \cite{BCS} (p.79), if $N$ has constant flag curvature $c$, then
\begin{eqnarray}\label{1.8}
{ R}^{i}_{j~kl}=c(g_{jl}\delta^{i}_{k}-g_{jk}\delta^{i}_{l}),
\end{eqnarray}
and thus
\begin{eqnarray}\label{1.12}g_{\bf{n}}({ R}_{\bf{n}}( X , Y )Z, \textbf{n})=0,\end{eqnarray}
for any $X, Y, Z \in T_xM.$
Combining (\ref{1.18}), (\ref{1.3}) and (\ref{1.4}) yields
\begin{align}
({ \hat{\nabla}}_X\hat h)(Y,Z)=&X\hat g({A}_{\bf{n}} Y,Z)-\hat g({A}_{\bf{n}} Z, { \hat{\nabla}}_XY)-\hat g({A}_{\bf{n}} Y,{ \hat{\nabla}}_XZ)\nonumber\\
=&\hat g({ \hat{\nabla}}_X({A}_{\bf{n}} Y),Z)-2C_{\textbf{n}}({A}_{\textbf{n}}X, {A}_{\textbf{n}}Y,Z)-2L_{\textbf{n}}(X,{A}_{\textbf{n}}Y,Z)-\hat g({A}_{\bf{n}} Z, { \hat{\nabla}}_XY)\nonumber\\
=&\hat g(({ \hat{\nabla}}_X{A}_{\bf{n}}) Y,Z)+\hat g({A}_{\bf{n}}({ \hat{\nabla}}_X Y),Z)-2C_{\textbf{n}}({A}_{\textbf{n}}X, {A}_{\textbf{n}}Y,Z)\nonumber\\
&-2L_{\textbf{n}}(X,{A}_{\textbf{n}}Y,Z)-\hat g({A}_{\bf{n}} Z, { \hat{\nabla}}_XY)\nonumber\\
=&\hat g(({ \hat{\nabla}}_X{A}_{\bf{n}}) Y,Z)-2C_{\textbf{n}}({A}_{\textbf{n}}X, {A}_{\textbf{n}}Y,Z)-2L_{\textbf{n}}(X,{A}_{\textbf{n}}Y,Z).
\label{1.19} \end{align}
Similarly, we have
\begin{eqnarray}\label{1.20}
({ \hat{\nabla}}_Y\hat h)(X,Z)=\hat g(({ \hat{\nabla}}_Y{A}_{\bf{n}}) X,Z)-2C_{\textbf{n}}({A}_{\textbf{n}}Y, {A}_{\textbf{n}}X,Z)-2L_{\textbf{n}}(Y,{A}_{\textbf{n}}X,Z).
\end{eqnarray}
Substituting (\ref{1.19}) and (\ref{1.20}) into (\ref{1.17}), and using (\ref{1.12}), we obtain (\ref{1.15}).
\endproof

\subsection{Focal points}

 Let $\phi:M \rightarrow \overrightarrow{N}(c)$ be an embedded submanifold. Let $exp: TN\rightarrow N$ be the exponential map of $N$. The normal exponential map
$$E: \mathcal{N}M\rightarrow N$$ is the restriction of the exponential map of $TN$ to $\mathcal{N}M$, that is, $E(x, \textbf{n})=\exp_{x}\textbf{n}$. If $\textbf{n}$ is the zero vector in the tangent space of $N$ at ${x}$, then $E(x, \textbf{n})$ is the point ${x}$. From [13, \S11.1], we know that the exponential map is $C^{\infty}$ on $TN\backslash {0}$ and $C^{1}$ on the zero sections of $TN$, so is $E$.

\begin{defi} \label{def1}
The focal points of $M$ are the critical values of the normal exponential map $E$. Specifically, a point $p\in N$ is called a \textbf{focal point} of $(M, x)$ of multiplicity $m$ if $p=E(x, \textbf{n})$ and the differential $E_{*}$ at $(x, \textbf{n})$ has nullity $m>0$.
\end{defi}

The \emph{focal set} of $M$ is the set of all focal points of $(M, x)$ for all $x\in M$.
Since $\mathcal{V}(M)$ and $N$ have the same dimension and ${\mathcal L}^{-1}:\mathcal{V}(M)\backslash\{0\}\rightarrow\mathcal{N}M\backslash\{0\}$ is a smooth diffeomorphism, it follows from Sard's Theorem that the focal set of $M$ has measure zero in $N$.

We now assume that $\textbf{n}$ is a unit normal vector of $\phi(M)$ at  $x$, and
$E(x,s\textbf{n})=\exp_{x}s\textbf{n}$, where~$s\geq 0$. Then $E(x, s\textbf{n})$ is the point of $N$ reached by traversing a length $s$ along the geodesic in $N$ with initial point ${x}$ and initial tangent vector $\textbf{n}$.
For a fixed $x_{0}\in \phi(M)$, let $U$ be a coordinate neighborhood of $x_0$ in $\phi(M)$ and
 $x=x(t)$ be a curve in $U$ such that
\begin{align*} x(0)=x_{0}, ~~~~\dot{x}(0)=X\in T_{x_{0}}\phi(M). \end{align*}  We consider the smooth variation of the geodesic $\gamma=\gamma(s)=E(x_0, s\textbf{n}(x_0))$:
$$\Phi:[0,+\infty)\times (-\varepsilon, \varepsilon)\rightarrow N$$
\begin{align*} (s,t)\mapsto E(x(t), s\textbf{n}(x(t)))=exp_{x(t)}s\textbf{n}(x(t)) \end{align*}
such that
$$\Phi(s,0)=\gamma(s),s\geq 0,$$
$$\Phi(0,t)=x(t), |t|<\varepsilon.$$

Denote
\begin{align}\widetilde{J}(s,t)=\Phi_{*}(\frac{\partial}{\partial t})=\frac{\partial \Phi}{\partial t}, ~~~\widetilde{T}(s,t)=\Phi_{*}(\frac{\partial}{\partial s})=\frac{\partial \Phi}{\partial s}, \label{2.4}\end{align}
and $$J(s)=\widetilde{J}(s,0),~~~~ T(s)=\widetilde{T}(s,0).$$
We have
\begin{align*}\widetilde{J}(0,t)=\dot{x}(t),~~~~ \widetilde{T}(0,t)=\textbf{n}(x(t)),\end{align*}
and
\begin{align}J(0)=X,~~ T(0)=\textbf{n}(x_{0}),~~g_{T(0)}(J(0), ~~T(0))=0. \label{2.4.1}\end{align}
 For any fixed $t$, $\Phi(s,t)$ is a geodesic in $N$, then $J(s)$ is a Jacobi field along  the geodesic $\gamma(s)$. Note that $N$ is of constant flag curvature. By the property of Jacobi field (\cite{BCS}),
 it follows   from a direct computation that
\begin{align}
\left\{
\begin{aligned}
J(s) & = E_{1}(s)\mathfrak{s}_{c}^{'}(s)+E_{2}(s)\mathfrak{s}_{c}(s), \\
E_{1}(0) & = J(0), \\
E_{2}(0) & = D_{T}^{T}J|_{s=0},
\end{aligned}
\right.
\label{2.9}\end{align}
where \begin{align}
\mathfrak{s}_{c}(s)=\left\{
\begin{array}{rcl}s,       &      & {c=0,}\\
\frac{\sin\sqrt{c}s}{\sqrt{c}},    &      & {c>0,}\\
\frac{\sinh\sqrt{-c}s}{\sqrt{-c}},       &      & {c<0,}
\end{array} \right.
\label{2.2}\end{align} and $E_{i}(s)$ is the parallel vector field along $\gamma(s)$ satisfying $g_T(E_{i}, T)=0,$ for $i=1,2.$

The following lemma gives the location of the focal points of $(M,x)$ along the geodesic $\gamma(s)=E(x,s\textbf{n})$ for $s\in \mathbb{R} $, in terms of the eigenvalues of the shape operator $A_{\textbf{n}}$ at $x$.
\begin{lemm} \label{thm1}
Let $\phi:M  \rightarrow \overrightarrow{N}(c) $ be a immersion submanifold , and let $\textbf{n}$ be a unit normal vector to $\phi(M )$ at ${x}$. Then $p=E(x, s\textbf{n})$ is a focal point
 of $(M ,x)$ of multiplicity $m_0>0$ if and only if there is an eigenvalue $\lambda$ of the shape operator $A_{\textbf{n}}$ of multiplicity $m_0$ such that
\begin{align} \lambda=\frac{\mathfrak{s}_{c}'(s)}{\mathfrak{s}_{c}(s)}=\left\{
\begin{array}{rcl}
\frac{1}{s},     &      & {c=0,}\\
\sqrt{c}\cot \sqrt{c}s,   &      & {c>0,}\\
\sqrt{-c}\coth \sqrt{-c}s,       &      & {c<0.}
\end{array} \right..\label{2.1}\end{align}
\end{lemm}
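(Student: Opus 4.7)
The plan is to characterize the kernel of $E_*$ at $(x, s\mathbf{n})$ via Jacobi fields along $\gamma(s)=E(x,s\mathbf{n})$, and then plug in the constant-curvature Jacobi formula (\ref{2.9}) to read off the condition on $A_{\mathbf{n}}$. First I would decompose $T_{(x,s\mathbf{n})}\mathcal{N}M$ into the radial direction and horizontal directions. The radial vector $\partial/\partial s$ is sent by $E_*$ to $\gamma'(s)\neq 0$, so it never lies in $\ker E_*$. Any other tangent vector is represented by a curve $x(t)\subset M$ together with the extension of $\mathbf{n}$ as a local unit normal field along $x(t)$. The corresponding variation $\Phi(s,t)=E(x(t),s\mathbf{n}(x(t)))$ is a geodesic variation, so its variation field $J(s)=\widetilde J(s,0)$ is a Jacobi field along $\gamma$, and $E_*$ applied to the horizontal tangent determined by $X=\dot x(0)$ equals $J(s)\in T_{\gamma(s)}N$. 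Hence $p$ is a focal point of multiplicity $m_0$ iff the linear map $X\mapsto J(s)$ from $T_xM$ to $T_{\gamma(s)}N$ has $m_0$-dimensional kernel.

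Second, I would determine the initial data of $J$. By (\ref{2.4.1}) we have $J(0)=X$. For the second initial condition, the torsion-freeness of the Berwald connection on the pullback bundle and the fact that $\widetilde J$, $\widetilde T$ come from coordinate vector fields (so their Lie bracket vanishes) identify $D_T^T J|_{s=0}$ with $D_X^{\mathbf{n}}\mathbf{n}$, the reference vector being $\widetilde T(0,0)=\mathbf{n}$. Since $F(\mathbf{n})\equiv 1$ and $C_{\mathbf{n}}(\mathbf{n},\cdot,\cdot)=0$, differentiating $g_{\mathbf{n}}(\mathbf{n},\mathbf{n})=1$ along $X$ yields $g_{\mathbf{n}}(D_X^{\mathbf{n}}\mathbf{n},\mathbf{n})=0$; hence $D_X^{\mathbf{n}}\mathbf{n}$ is tangent to $M$ and by the Weingarten formula (\ref{1.2}) equals $-A_{\mathbf{n}}X$. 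Thus in the representation (\ref{2.9}), $E_1(0)=X$ and $E_2(0)=-A_{\mathbf{n}}X$.

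Third, I would finish by plugging into (\ref{2.9}) and inverting parallel transport. Writing $P_s:T_xN\to T_{\gamma(s)}N$ for Berwald parallel transport along $\gamma$, we have $E_i(s)=P_s E_i(0)$, so $J(s)=P_s\bigl(\mathfrak{s}_c'(s)X-\mathfrak{s}_c(s)A_{\mathbf{n}}X\bigr)$. Because $P_s$ is a linear isomorphism, $J(s)=0$ iff $\mathfrak{s}_c'(s)X=\mathfrak{s}_c(s)A_{\mathbf{n}}X$, i.e.\ iff $X$ lies in the eigenspace of $A_{\mathbf{n}}$ with eigenvalue $\lambda=\mathfrak{s}_c'(s)/\mathfrak{s}_c(s)$. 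The multiplicity $m_0$ of the focal point then equals the dimension of this eigenspace, which is precisely the multiplicity of $\lambda$ as eigenvalue of $A_{\mathbf{n}}$, proving (\ref{2.1}).

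The main obstacle is the second step: rigorously justifying $D_T^T J|_{s=0}=D_X^{\mathbf{n}}\mathbf{n}$. Because the Berwald connection is reference-vector dependent and ``torsion-free'' must be interpreted on the pullback bundle, I would carry this out by writing the variation $\Phi$ in local coordinates, applying the explicit formula (\ref{Z1}) to $\widetilde J$ with reference vector $\widetilde T$, and then passing to the limit $s\to 0$ where the reference vector $\widetilde T(s,t)$ tends to $\mathbf{n}(x(t))$, so that $\Gamma^i_{jk}(\widetilde T)\to\Gamma^i_{jk}(\mathbf{n})$. Once this identification is made cleanly, the remaining work is the linear-algebraic argument in the third step. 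A minor side issue is that for submanifolds of codimension greater than one, additional vertical variations in $\mathcal{N}_xM$ could a priori contribute; this is handled by noting that the statement, as used in the sequel, concerns hypersurfaces where the fiber of $\mathcal{N}M$ is one-dimensional and spanned by the radial direction already excluded above.
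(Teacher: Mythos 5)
Your proposal follows essentially the same route as the paper: represent $E_*$ on tangent vectors of $\mathcal{N}M$ by Jacobi fields along $\gamma$, use $[\partial_s,\partial_t]=0$ and the Weingarten formula to get the initial data $J(0)=X$, $D_T^TJ|_{s=0}=-A_{\mathbf{n}}X$, and read off the vanishing of $J(s)$ from the parallel-frame representation (\ref{2.9}). The only place the paper is more careful is in handling tangent vectors of $\mathcal{N}M$ with a nonzero radial component: it takes a general curve $(x(t),s(t)\mathbf{n}(x(t)))$, computes $E_*V=J(s_0)+\dot s(0)T(s_0)$, and uses $g_T(T,E_i)=0$ to force $\dot s(0)=0$, whereas you only exclude the purely radial direction and then treat the rest as horizontal — the same orthogonality you already have in (\ref{2.9}) closes this small gap.
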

\proof
If $p=E(x_{0}, s_{0}\textbf{n}(x_{0}))$ is a focal point of $(M,x_{0})$, then there exists a non-zero tangent vector $V\in T_{(x_{0}, s_{0}\textbf{n}(x_{0}))}\mathcal{N}M $ such that $E_{*}V=0.$
Let $\sigma(t)=(x(t), s(t)\textbf{n}(x(t)))$ be a curve in $\mathcal{N}M$ satisfying
$V=\dot{\sigma}(0) $
and $\widetilde{\sigma}(t)=E(\sigma(t))=\Phi(t,s(t))$, then
\begin{align*}
0=E_{*}V=E_{*}(\dot{\sigma}(0))=\dot{\widetilde{\sigma}}(0) & =\Phi_{*}(\frac{\partial}{\partial t}+\dot{s}\frac{\partial}{\partial s})|_{(0,s_{0})}\nonumber\\
                                                & =J(s_{0})+\dot{s}(0)T(s_{0})\nonumber\\
                                                & =E_{1}(s_{0})\mathfrak{s}_{c}^{'}(s_{0})+E_{2}(s_{0})\mathfrak{s}_{c}(s_{0})+\dot{s}(0)T(s_{0}).
\end{align*}
Noting that $g_{T}(T,~E_{i})=0$, we have
\begin{align}
\left\{
\begin{aligned}
E_{1}(s_{0})\mathfrak{s}_{c}^{'}(s_{0})+E_{2}(s_{0})\mathfrak{s}_{c}(s_{0})=0, \\
\dot{s}(0)=0.
\end{aligned}
\right.
\label{2.13}\end{align}
Since $E_i(s)$'s are parallel along the geodesic, their angle and lengths with respect to $g_T$ are constant along the geodesic. Therefore, we have
\begin{align*}\mathfrak{s}_{c}^{'}(s_{0})E_{1}(0)
+\mathfrak{s}_{c}(s_{0})E_{2}(0)=0.\end{align*}
That is,
\begin{align}\mathfrak{s}_{c}^{'}(s_{0})J(0)+\mathfrak{s}_{c}(s_{0})D_{T}^{T}J|_{s=0}=0. \label{2.15}\end{align}
Since  $[\frac{\partial}{\partial s}, \frac{\partial}{\partial t}]=0,$ we obtain
\begin{align} D^{\widetilde T}_{\widetilde T}\widetilde J-D^{\widetilde T}_{\widetilde J}\widetilde T=\Phi_{*}([\frac{\partial}{\partial s}, \frac{\partial}{\partial t}])=0. \label{2.6}\end{align}
Noting that $V\neq0$ and $\dot{s}(0)=0$,  it follows from (\ref{2.15}) that $\mathfrak{s}_{c}(s_{0})\neq0$. Combining (\ref{2.4.1}), (\ref{2.6}) and (\ref{2.15}) yields
\begin{align*}
D^{\textbf{n}}_{X}\textbf{n}=D_{J}^{T}\widetilde T|_{s=0,t=0}=D_{T}^{T}J|_{s=0}=-\frac{\mathfrak{s}_{c}'(s_{0})}{\mathfrak{s}_{c}(s_{0})}X. \end{align*}
Then by Weingarten formula (\ref{0.1}), we have  $A_{\textbf{n}}X=-\left(D^{\textbf{n}}_{X}\textbf{n}\right)^\top_{g_{\textbf{n}}}=\frac{\mathfrak{s}_{c}'(s_{0})}{\mathfrak{s}_{c}(s_{0})}X$. This completes the proof.
\endproof

\subsection{Tubes and Parallel Hypersurfaces}

 Let $\phi:M  \rightarrow \overrightarrow{N}(c)$ be an immersion with codimension $n-m\geq 1$.  If $n-m>1$, we define $M_s~(s>0)$ by the map
\begin{align}\phi_{s}: \mathcal{N}^0M \rightarrow N,~~~~
\phi_{s}(x, \textbf{n})=E(x, s\textbf{n}).\label{2.17}\end{align}
If $(x, s\textbf{n})$ is not a critical point of $E$, then $\phi_{s}$ is an immersion in a neighborhood of $(x, \textbf{n})$ in $\mathcal{N}^0M$. Thus $\phi_{s}$ is an $(n-1)$-dimensional immersion in $N (c)$ if there is no focal point of $M$ on $M_s$. It follows from Lemma \ref{thm1} and \cite{AJ} that for any given $x\in \phi(M)$, there is a neighborhood $U$ of $x$ in $\phi(M)$ such that for all $s>0$ sufficiently small, the restriction of $\phi_{s}$ to $\mathcal{N}^0U$ over $U$ is an immersion onto $M_s$, which lies in a tubular neighbourhood over $U$  and is geometrically a tube of radius $s$ over $U$. For the sake of simplicity, we call $M_s$ a \emph{tube} over $M$ whether it lies in a tubular neighborhood or not.

If $M$ is a hypersurface, i.e. $n-m=1$, then $\mathcal{N}^0M$ is a double covering of $M$. For $s=0$, we have $M_0=\phi(M)$. In this case, for local calculations, we can assume that $M$ is orientable with a local unit normal vector field $\textbf{n}$ and define $M_s$  by the map $\phi_{s}: M \rightarrow N$,
\begin{align}\phi_{s}(x)=E(x, s\textbf{n}),\label{2.18}\end{align}
for $s\in\mathbb{R},$ rather than defining $\phi_{s}$ on the double covering $\mathcal{N}^0M$. Analogously, there is a neighborhood $U$ of $x$ in $\phi(M)$ such that for all $|s|$ sufficiently small, the restriction of $\phi_{s}$ to $U$ is an immersion onto $M_s$, which lies in a tubular neighbourhood over $U$ and is geometrically a parallel hypersurface over $U$. We call $M_s$ a \emph{parallel hypersurface} over $M$ if there is no focal point of $M$ on $M_s$.

\begin{remark} Note that $\overrightarrow{N}(c)$ is probably not backward complete, $s$ is not necessarily to take all negative values. In $N (c)$, which is both forward and backward complete, $s$ can take any real value.  For a well-defined negative value $s$, $M_s=\phi_{s}(M)$ lies locally on the side of $M$ in the direction of $-\textbf{n}$, instead of $\textbf{n}$. But it should be noted that $-\textbf{n}$ may not be  the normal vector of $M$ in a Finsler manifold. So $M_s$ may not be parallel to $M$ in the direction of  $-\textbf{n}$. In fact,  $M$ is parallel to $M_s$ in the direction of $\textbf{n}$, or in other words, $M_s$ is parallel to $M$ in the direction of  $-\textbf{n}$ with respect to the reverse metric
$
\overleftarrow{F}(y)
$. Nevertheless, we also call $M_s$ a parallel hypersurface for convenience.
\end{remark}
We give the principal curvatures of a tube $M_{s}$ in terms of the principal curvatures of the original submanifold $M$ in the following.

\begin{lemm} \label{thm2} Let  $M $ be a submanifold of $\overrightarrow{N}(c)$ and $\phi_{s}$ an immersion near $(x, \textbf{n})\in \mathcal{N}^0M$. Let $\lambda_{1}, \cdots,  \lambda_{_{m}}$ be the principal curvatures of $M$ at $x$ with respect to $\textbf{n}$.  Then the principal curvatures of $M_s$ at $\phi_{s}(x, \textbf{n})$ are
\begin{align} \lambda_{a}(s)&=\frac{-\mathfrak{s}_{c}''(s)+ \lambda_a \mathfrak{s}_{c}'(s)}{\mathfrak{s}_{c}^{'}(s)- \lambda_a \mathfrak{s}_{c}(s)},~~~~a=1,\ldots,m;\\
\lambda_{b}(s)&=\frac{-\mathfrak{s}_{c}^{'}(s)}{\mathfrak{s}_{c}(s)},
~~~~b=m+1,\ldots,n-1~(if~~n-m>1),
\label{2.192}\end{align}
where $\mathfrak{s}_{c}(s)$ is defined by (\ref{2.2}).
\end{lemm}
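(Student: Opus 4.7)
The plan is to identify tangent vectors to $M_s$ at $\phi_s(x,\textbf{n})$ with Jacobi fields along the radial geodesic $\gamma(s)=E(x,s\textbf{n})$, and then read off the shape operator of $M_s$ (with respect to the unit normal $T(s)=\dot\gamma(s)$) via the symmetry of mixed covariant derivatives already established in (\ref{2.6}). Since $\mathcal{N}^0M$ has dimension $n-1$, its tangent space at $(x,\textbf{n})$ splits into an $m$-dimensional horizontal part (varying $x$ in $M$ while extending $\textbf{n}$) and an $(n-m-1)$-dimensional vertical part (varying $\textbf{n}$ inside the unit normal fiber over $x$), matching the dimension of $M_s$.

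Step 1: For a principal direction $X\in T_xM$ with $A_{\textbf{n}}X=\lambda_a X$, choose a curve $x(t)$ with $\dot{x}(0)=X$ and extend $\textbf{n}$ along $x(t)$ so that its derivative in the $g_{\textbf{n}}$-normal directions to $M$ vanishes; the pointwise constraint $F(\textbf{n}(x(t)))=1$ together with Weingarten (\ref{1.2}) then gives $D^{\textbf{n}}_X\textbf{n}=-\lambda_a X$. The variation $\Phi(s,t)=\exp_{x(t)}(s\textbf{n}(x(t)))$ produces a Jacobi field $J(s)$ with $J(0)=X$ and $D^T_TJ|_{s=0}=D^{\textbf{n}}_X\textbf{n}=-\lambda_a X$, so (\ref{2.9}) collapses to $J(s)=E(s)\bigl(\mathfrak{s}'_c(s)-\lambda_a\mathfrak{s}_c(s)\bigr)$ where $E$ is the parallel extension of $X$ along $\gamma$. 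For the vertical part, fix $x(t)\equiv x$ and let $\textbf{n}(t)$ be a curve in the unit normal fiber with $\dot{\textbf{n}}(0)=W$; then $J(0)=0$ and $D^T_TJ|_{s=0}=W$, so $J(s)=\tilde E(s)\mathfrak{s}_c(s)$ with $\tilde E$ the parallel extension of $W$.

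Step 2: Verify that $T(s)$ is indeed a $g_T$-unit normal to $M_s$ at $\phi_s(x,\textbf{n})$. Both Jacobi fields above start with $g_T(J,T)|_{s=0}=0$ (horizontal case from $g_{\textbf{n}}(X,\textbf{n})=0$, vertical case trivially since $J(0)=0$), and $g_T(D^T_TJ,T)|_{s=0}=0$ follows from $F(\textbf{n})\equiv 1$ together with $C_{\textbf{n}}(\textbf{n},\cdot,\cdot)=L_{\textbf{n}}(\textbf{n},\cdot,\cdot)=0$ in (\ref{eq.con2}); hence $g_T(J(s),T(s))\equiv 0$ on $\gamma$, so $T(s)\perp T_{\phi_s(x,\textbf{n})}M_s$.

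Step 3: Apply (\ref{2.6}) to rewrite $D^{T(s)}_{J(s)}T(s)=D^{T(s)}_{T(s)}J(s)$ and differentiate the explicit formulas. In the horizontal case this gives
\begin{equation*}
D^T_TJ(s)=E(s)\bigl(\mathfrak{s}''_c(s)-\lambda_a\mathfrak{s}'_c(s)\bigr)=\frac{\mathfrak{s}''_c(s)-\lambda_a\mathfrak{s}'_c(s)}{\mathfrak{s}'_c(s)-\lambda_a\mathfrak{s}_c(s)}\,J(s),
\end{equation*}
and in the vertical case,
\begin{equation*}
D^T_TJ(s)=\tilde E(s)\,\mathfrak{s}'_c(s)=\frac{\mathfrak{s}'_c(s)}{\mathfrak{s}_c(s)}\,J(s).
\end{equation*}
Weingarten's formula (\ref{0.1}) applied to $M_s$ with unit normal $T(s)$ then yields $A_{T(s)}J(s)=-(D^{T(s)}_{J(s)}T(s))^\top$, which equals $\lambda_a(s)J(s)$ and $\lambda_b(s)J(s)$ respectively with the claimed values.

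The main obstacle is the Finsler-specific part of Step 1--2: justifying that the unit normal extension $\textbf{n}(x(t))$ can be chosen with vanishing anisotropic-normal derivative (so that $D^{\textbf{n}}_X\textbf{n}=-\lambda_a X$ with no contribution in the other normal directions), and checking the orthogonality $g_T(J,T)\equiv 0$ using the Cartan/Landsberg identities in (\ref{eq.con2}). Once these are in place, the result is a direct consequence of the Jacobi-field representation (\ref{2.9}) and the torsion-freeness identity (\ref{2.6}).
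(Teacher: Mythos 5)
Your proposal is correct and follows essentially the same route as the paper's proof: representing $\phi_{s*}$ of horizontal (principal) and vertical (fiber) directions as Jacobi fields via (\ref{2.9}), using the parallelism of the $E_i$ and the symmetry (\ref{2.6}) to compute $D^T_J\widetilde T=D^T_TJ$, and reading off the shape operator from the Weingarten formula; the paper invokes Gauss's Lemma where you rederive the orthogonality $g_T(J,T)\equiv 0$ directly, which is the same content. You even flag (correctly) the one point the paper glosses over, namely choosing the extension of $\textbf{n}$ so that $D^{\textbf{n}}_X\textbf{n}$ has no component in the other normal directions.
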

\proof As $\phi_{s}$ an immersion near $(x, \textbf{n})$, $M_s$ is locally a hypersurface of $\overrightarrow{N}(c)$. We denote the shape operator of $M$ at $x$ and $M_s$ at $E(x, s\textbf{n})$ by $A_{\textbf{n}}$ and $A_{s}$, respectively.

When $n-m\geq1$, as in the preceding subsection, let $\gamma(s)=E(x, s\textbf{n}), s\geq0$, which is a geodesic in $N$. Denote by $X$ the principal vector of $A_{\textbf{n}}$ with respect to $\lambda_{a}$ for any given $a$, $1\leq a\leq m$. We consider the smooth variation of $\gamma$: $$\Phi (t,s)=E(x(t), s\textbf{n}(x(t))),$$ where $x(t)~(-\varepsilon<t<\varepsilon)$ is a smooth curve on $M$ satisfying $x(0)=x,\dot{x}(0)=X$.
Noting that $\phi_{s}(x(t))=\Phi (t,s)$ is a curve on $M_s$, we have
\begin{align}J(s)=\Phi_{*}(\frac{\partial}{\partial t})|_{t=0}=\phi_{s*}X.\label{2.19}\end{align}
Using (\ref{2.19}) and the first equation of (\ref{2.9}), we get
\begin{align}
\phi_{s*}X=\mathfrak{s}_{c}^{'}(s)E_{1}(s)+\mathfrak{s}_{c}(s)E_{2}(s),
\label{2.20}\end{align}
\begin{align}
D^{T}_{T}\phi_{s*}X=\mathfrak{s}_{c}^{''}(s)E_{1}(s)+\mathfrak{s}_{c}'(s)E_{2}(s).
\label{2.201}\end{align}

On the other hand, it follows from the last two equations of (\ref{2.9}) that
\begin{align}
E_{1}(0)&=J(0)=X,\nonumber\\
E_{2}(0)&=  D_{T}^{T}J|_{s=0}= D_{J}^{T}\textbf{n}= -A_{\textbf{n}}X= -\lambda_a X,\nonumber
\end{align}
 which yields $E_{2}(0)=-\lambda_a E_{1}(0)$. Since $E_{i}(s)$'s are parallel vector fields along the geodesic $\gamma(s)$, we know that
\begin{align}
E_{2}(s)=-\lambda_a E_{1}(s).
\label{2.25}\end{align}
Substituting equation (\ref{2.25}) to (\ref{2.20}) and (\ref{2.201}) yields the following
\begin{align}
\phi_{s*}X=(\mathfrak{s}_{c}^{'}(s)- \lambda_a \mathfrak{s}_{c}(s))E_{1}(s),\label{2.26}\\
D^{T}_{T}\phi_{s*}X=(\mathfrak{s}_{c}''(s)- \lambda_a \mathfrak{s}_{c}'(s))E_{1}(s).\label{2.27}
\end{align}
Because $\phi_{s}$ is defined by the normal exponential map, we know that $\widetilde T(s,t)=(d\exp_{x(t)})_{s\textbf{n}}\textbf{n}(x(t))$ is the tangent vector of the radius geodesic from point $x(t)$ and $\phi_{s*}\dot{x}(t)=(d\exp_{x(t)})_{s\textbf{n}}\dot{x}(t)$ is the tangent vector of $M_s$, which is also the tangent vector of the geodesic sphere at $\phi_{s}x(t)$. By Gauss's Lemma (\cite{BCS}),  $$g_{\widetilde T(s,t)}(\widetilde T(s,t),\phi_{s*}\dot{x}(t))=g_{\textbf{n}(x(t))}(\textbf{n}(x(t)),\dot{x}(t))=0,$$
 which shows $\widetilde T(s,t)=\Phi_{*}\textbf{n}(x(t))$ is also a unit normal vector field of $M_s$. From (\ref{2.6}),
 we know that $$ D_{\phi_{s*}X}^{T(s)}\widetilde T|_{t=0}= D_{T(s)}^{T(s)}\phi_{s*}X.$$  Hence by Weingarten formula and (\ref{2.27}), we have  \begin{align}A_s(\phi_{s*}X)=-D_{\phi_{s*}X}^{T}\widetilde T|_{t=0}=(\lambda_a \mathfrak{s}_{c}'(s)-\mathfrak{s}_{c}''(s))E_{1}(s).\label{2.22}\end{align}
Noting that $\phi_{s}(x, \textbf{n})$ is not a focal point of $(M, x)$, we get $\mathfrak{s}_{c}^{'}(s)- \lambda_a \mathfrak{s}_{c}(s)\neq 0$ by (\ref{2.1}). Thus, from (\ref{2.26}) and (\ref{2.22}) we obtain
$$A_{s}(\phi_{s*}X)=\frac{-\mathfrak{s}_{c}''(s)+ \lambda_a \mathfrak{s}_{c}'(s)}{\mathfrak{s}_{c}^{'}(s)- \lambda_a \mathfrak{s}_{c}(s)}\phi_{s*}X,$$
which means that $\phi_{s*}X$ is an eigenvector of $A_{s}$ with principal curvature $\frac{-\mathfrak{s}_{c}''(s)+ \lambda_a \mathfrak{s}_{c}'(s)}{\mathfrak{s}_{c}^{'}(s)- \lambda_a \mathfrak{s}_{c}(s)}$.

If $n-m>1$, let $\sigma(t)~(-\varepsilon<t<\varepsilon)$ be any curve in $\mathcal{N}_x^0M$ such that $\sigma(0)=\textbf{n},\dot\sigma(0)=V$. Similarly, we consider the smooth variation of $\gamma$: $$\Phi (t,s)=E(x, s\sigma(t)).$$ Since $\phi_{s}(\sigma(t))=\Phi (t,s)$ is a curve on $M_s$, the variation vector field $J(s)$ satisfies
\begin{align}J(0)=0,~~J(s)=\Phi_{*}(\frac{\partial}{\partial t})|_{t=0}=\phi_{s*}V,~~g_{T(s)}(J(s), ~~T(s))=0.\label{2.191}\end{align}
 Noting that $J(s)$ is a Jacobi field along the geodesic $\gamma$, we can get from (\ref{2.191}) that
\begin{align}
\left\{
\begin{aligned}
J(s) & = \mathfrak{s}_{c}(s)E(s),  \\
E(0) & = D_{T}^{T}J|_{s=0},
\end{aligned}
\right.
\label{2.91}\end{align}
where $E(s)$ is a parallel vector field along $\gamma(s)$ satisfying $g_T(E, T)=0.$ From (\ref{2.191})  and (\ref{2.91}), we obtain
\begin{align*}
D_{J}^{T}\widetilde T|_{t=0}&=D_{T}^{T}J=\mathfrak{s}_{c}'(s)E(s), \\
\phi_{s*}V&= \mathfrak{s}_{c}(s)E(s).\end{align*}
 So then
\begin{align*}
A_{s}(\phi_{s*}V)=-D_{J}^{T}\widetilde T|_{t=0}=-\frac{\mathfrak{s}_{c}'(s)}{\mathfrak{s}_{c}(s)}\phi_{s*}V.
\end{align*}
\endproof

\section{Isoparametric Hypersurfaces in Finsler space form}

\subsection{ The reverse metric}
Let $(N, F)$ be an oriented smooth Finsler manifold. The reverse metric of $F$ is defined by
$
\overleftarrow{F}(x,y)=F(x,-y)$, where $y\in T_{x}N$, $x\in N$. Then $
\overleftarrow{F}^*(\xi)=F^*(-\xi)
$, where $\xi\in T^*_{x}N$.
One can easily verify that
\begin{align*}
\overleftarrow{g}_{ij}(y)=g_{ij}(-y),~~\overleftarrow{\Gamma}^{i}_{jk}(y)
=\Gamma^{i}_{jk}(-y),~~\overleftarrow{N}^{i}_{j}(y)=-N^{i}_{j}(-y).
\end{align*}
Let $f: N \to \mathbb{R}$ be a smooth function. Then from the definition of gradient vector and Finsler-Laplacian of $f$, we have
\begin{align}
\overleftarrow{\nabla}(-f)=-\nabla f,~~\overleftarrow{\Delta} (-f)=-\Delta f\label{3.05}.
\end{align}
\begin{lemm} \label{lem4.01}Let $f: N \to \mathbb{R}$ be a non-constant function defined on a Finsler manifold $(N, F)$. Then $f$ is an isoparametric function with respect to $F$ if and only if $-f$ is an isoparametric function with respect to $\overleftarrow{F}$. Moreover, the principal curvatures of isoparametric hypersurfaces in terms of the level sets of $f$ and $-f$ with respect to $F$ and $\overleftarrow{F}$ respectively are quite the contrary.
\end{lemm}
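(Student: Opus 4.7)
The plan is to derive both claims directly from the identities recorded in Subsection~3.1, namely $\overleftarrow{F}(y)=F(-y)$, $\overleftarrow{g}_{ij}(y)=g_{ij}(-y)$, $\overleftarrow{\Gamma}^{i}_{jk}(y)=\Gamma^{i}_{jk}(-y)$, and in particular the gradient/Laplacian formula~(\ref{3.05}). For the isoparametric equivalence I would substitute $-f$ into the system~(\ref{3.1}) with $F$ replaced by $\overleftarrow{F}$: using (\ref{3.05}) together with $\overleftarrow{F}(-y)=F(y)$,
\begin{align*}
\overleftarrow{F}\bigl(\overleftarrow{\nabla}(-f)\bigr)&=\overleftarrow{F}(-\nabla f)=F(\nabla f)=\widetilde{a}(f),\\
\overleftarrow{\Delta}(-f)&=-\Delta f=-\widetilde{b}(f).
\end{align*}
Reading the right-hand sides as functions of $-f$, i.e.\ setting $\widetilde{a}^{\star}(t):=\widetilde{a}(-t)$ and $\widetilde{b}^{\star}(t):=-\widetilde{b}(-t)$, exhibits $-f$ as an isoparametric function for $\overleftarrow{F}$ with data $(\widetilde{a}^{\star},\widetilde{b}^{\star})$. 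Since $\overleftarrow{\overleftarrow{F}}=F$, the converse is the same computation applied in the opposite direction.

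For the principal-curvature claim, the regular level set $M_{t}=f^{-1}(t)$ coincides as a smooth hypersurface with $(-f)^{-1}(-t)$. If $\textbf{n}=\nabla f/F(\nabla f)$ is the $F$-unit normal, then by~(\ref{3.05}) the $\overleftarrow{F}$-unit normal of the same hypersurface is $\overleftarrow{\textbf{n}}=\overleftarrow{\nabla}(-f)/\overleftarrow{F}(\overleftarrow{\nabla}(-f))=-\textbf{n}$. Because $\overleftarrow{g}_{ij}(\overleftarrow{\textbf{n}})=g_{ij}(-\overleftarrow{\textbf{n}})=g_{ij}(\textbf{n})$, the fundamental tensor, and hence the $g$-orthogonal projection appearing in the Weingarten formula, is unaffected by passing to the reverse metric. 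A short coordinate check using $\overleftarrow{\Gamma}^{i}_{jk}(w)=\Gamma^{i}_{jk}(-w)$ in~(\ref{Z1}) shows $\overleftarrow{D}^{w}_{v}=D^{-w}_{v}$, so
$$\overleftarrow{D}^{\overleftarrow{\textbf{n}}}_{X}\overleftarrow{\textbf{n}}=D^{-\overleftarrow{\textbf{n}}}_{X}(-\textbf{n})=-D^{\textbf{n}}_{X}\textbf{n},$$
and the Weingarten formula~(\ref{0.1}) then yields $\overleftarrow{A}_{\overleftarrow{\textbf{n}}}(X)=-A_{\textbf{n}}(X)$ on $T_{x}M_{t}$. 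Hence every principal curvature with respect to $\overleftarrow{F}$ is the negative of the corresponding one with respect to $F$, which is exactly the meaning of ``quite the contrary.''

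The only delicate step, and the one on which I would focus, is the sign bookkeeping in the reverse Berwald connection: I must verify both that evaluating $\overleftarrow{\Gamma}$ at the reverse normal reproduces the forward Christoffels at $\textbf{n}$, and that the reference vector in the tangential projection is interpreted consistently on both sides. Once that identification is in place, the remainder of the argument is a direct chain of substitutions into the formulas already listed.
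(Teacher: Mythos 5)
Your argument is correct and follows essentially the same route as the paper: substitute $-f$ into the isoparametric system using~(\ref{3.05}), and compute $\overleftarrow{D}^{-\textbf{n}}_{X}(-\textbf{n})=-D^{\textbf{n}}_{X}\textbf{n}$ to flip the sign of the shape operator. Your extra care with the reparametrizations $\widetilde{a}^{\star}(t)=\widetilde{a}(-t)$, $\widetilde{b}^{\star}(t)=-\widetilde{b}(-t)$ and with the identities $\overleftarrow{D}^{w}_{v}=D^{-w}_{v}$ and $\overleftarrow{g}_{\overleftarrow{\textbf{n}}}=g_{\textbf{n}}$ only makes explicit what the paper leaves implicit.
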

\begin{proof}
According to the definition of isoparametric function in Finsler space, if $f$ is an isoparametric function with respect to $F$, (\ref{0.01}) holds. Combining (\ref{3.05})  with (\ref{0.01}), we have
\begin{equation*}\left\{\begin{aligned}
      &\overleftarrow{F}(\overleftarrow{\nabla} (-f))= a (f),\\
      &\overleftarrow{\Delta} (-f)=- b (f).
\end{aligned}\right.\end{equation*}
Thus $-f$ is an isoparametric function with respect to $\overleftarrow{F}$.

Conversely, $F$ can be regarded as the reverse metric of $\overleftarrow{F}$, hence the proof is analogous.

Furthermore, let $M$ be a level surface of $f$. Then $\textbf{n}=\frac{\nabla f}{F(\nabla f)}$ and $-\textbf{n}=\frac{\overleftarrow{\nabla} (-f)}{\overleftarrow{F}(\overleftarrow{\nabla} (-f))}$ are the unit normal vector fields of~$M$ with respect to $F$ and $\overleftarrow{F}$, respectively. For any $X\in T_{x}M$ and $x\in M$, we have
\begin{align*}
\overleftarrow{D}^{\textbf{-n}}_{X}(-\textbf{n})= -D^{\textbf{n}}_{X}\textbf{n},
\end{align*}
where $D$ and $\overleftarrow{D}$ denote the covariant derivatives with respect to $F$ and $\overleftarrow{F}$, respectively. Noting that $D^{\textbf{n}}_{X}\textbf{n}=-A_{\textbf{n}}X=-\lambda X$, we complete the proof.
\end{proof}
\subsection{The integrability of principal foliations}

Let $M$ be an isoparametric hypersurface in a Finsler space form $N(c)$.
Suppose that $\lambda$ and $\mu$ are the constant principal curvatures with corresponding principal foliations $V_{\lambda}$ and $V_{\mu}$. If $X \in V_{\lambda}$ and $Y \in V_{\mu}$, then from (\ref{1.15}), one easily verifies that\\
\begin{align}\label{3.5}
\hat g(({ \hat{\nabla}}_Z{A}_{\textbf{n}})X,Y)=(\lambda-\mu)\hat g({ \hat{\nabla}}_ZX, Y)
\end{align}
for any vector $Z\in TM$.
\begin{lemm} Let $(M,\hat g)$ be an isoparametric hypersurface in a Finsler space form $N^{n}$ of constant flag curvature $c$. For all principal curvaturs $\lambda, \mu$, we have\\
$(1)$ The principal foliations is integrable, that is $[X, Y] \in V_{\lambda}$ for all $X, Y \in V_{\lambda}$.\\
$(2)$ ${ \hat{\nabla}}_XY \perp V_{\lambda}$ if $X \in V_{\lambda},~~Y \in V_{\mu},~~\lambda \neq \mu.$
\end{lemm}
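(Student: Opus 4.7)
The plan is to derive both statements directly from the Codazzi-type identity (\ref{1.15}), the formula (\ref{3.5}) that is already displayed just before the lemma, the torsion-freeness of $\hat{\nabla}$ (recorded in Section 3.2), and the self-adjointness of $A_{\textbf{n}}$ with respect to $g_{\textbf{n}}$ (proved immediately after equation (\ref{0.1})). No new curvature computation is needed; the work is just linear algebra against these ingredients. Since the principal curvatures $\lambda,\mu$ are constants, $\hat{\nabla}_Z(A_{\textbf{n}}X)=\lambda\,\hat{\nabla}_Z X$ for $X\in V_\lambda$, and similarly for $V_\mu$.

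For part (1), let $X,Y\in V_\lambda$. Expanding both sides of (\ref{1.15}) using $A_{\textbf{n}}X=\lambda X$ and $A_{\textbf{n}}Y=\lambda Y$ together with the constancy of $\lambda$ gives
\[
\lambda\,\hat{\nabla}_XY - A_{\textbf{n}}(\hat{\nabla}_XY) \;=\; \lambda\,\hat{\nabla}_YX - A_{\textbf{n}}(\hat{\nabla}_YX).
\]
Since $\hat{\nabla}$ is torsion-free, $[X,Y]=\hat{\nabla}_XY-\hat{\nabla}_YX$, so the above rearranges to $A_{\textbf{n}}[X,Y]=\lambda[X,Y]$, which is exactly $[X,Y]\in V_\lambda$.

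For part (2), let $X\in V_\lambda$, $Y\in V_\mu$ with $\lambda\ne\mu$, and take an arbitrary $W\in V_\lambda$. Pair both sides of (\ref{1.15}) (with the roles $X,Y$ now being the tangent vectors $X,Y$ of the lemma) against $W$ in $\hat{g}$. On the $(\hat{\nabla}_XA_{\textbf{n}})Y$ side, the same expansion as above together with self-adjointness, $\hat{g}(A_{\textbf{n}}(\hat{\nabla}_XY),W)=\hat{g}(\hat{\nabla}_XY,A_{\textbf{n}}W)=\lambda\,\hat{g}(\hat{\nabla}_XY,W)$, yields $(\mu-\lambda)\hat{g}(\hat{\nabla}_XY,W)$. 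On the $(\hat{\nabla}_YA_{\textbf{n}})X$ side, the analogous expansion produces $\lambda\,\hat{g}(\hat{\nabla}_YX,W)-\hat{g}(\hat{\nabla}_YX,A_{\textbf{n}}W)=0$ since both $X$ and $W$ lie in $V_\lambda$. Equating the two sides and dividing by $\mu-\lambda\ne0$ gives $\hat{g}(\hat{\nabla}_XY,W)=0$ for every $W\in V_\lambda$, i.e.\ $\hat{\nabla}_XY\perp V_\lambda$.

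I do not anticipate a genuine obstacle: the proof is morally identical to the Riemannian one, the only subtle points being (i) that $\hat{\nabla}$ is not $\hat{g}$-compatible, so one must never differentiate the metric when computing $\hat{g}((\hat{\nabla}_Z A_{\textbf{n}})\cdot,\cdot)$, and (ii) that self-adjointness of $A_{\textbf{n}}$ is with respect to $g_{\textbf{n}}$ rather than some other auxiliary inner product. Both are already secured by the formulas cited above, so the argument proceeds as a short two-line computation for each part.
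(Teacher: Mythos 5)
Your proof is correct, and part (2) is essentially the computation the paper performs: pairing the Codazzi identity (\ref{1.15}) against a third vector and using the self-adjointness of $A_{\textbf{n}}$ with respect to $\hat g$ to peel off the eigenvalues is exactly the content of the displayed identity (\ref{3.5}), which the paper applies to get its equation $0=(\mu-\lambda)\hat g(\hat{\nabla}_XZ,Y)$. Part (1), however, takes a genuinely different and cleaner route. The paper proves integrability by showing $\hat g([X,Y],Z)=0$ for every $Z\in V_\mu$ with $\mu\neq\lambda$; because $\hat{\nabla}$ is not $\hat g$-compatible, this forces it to invoke the non-metricity formula (\ref{1.4}), producing correction terms $2C_{\textbf{n}}(A_{\textbf{n}}X,Y,Z)+2L_{\textbf{n}}(X,Y,Z)$ in each of $\hat g(\hat{\nabla}_XY,Z)$ and $\hat g(\hat{\nabla}_YX,Z)$, which cancel in the difference only because $A_{\textbf{n}}X=\lambda X$ and $A_{\textbf{n}}Y=\lambda Y$ make the two Cartan-tensor terms coincide. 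Your argument instead reads (\ref{1.15}) as the operator identity $(\lambda I-A_{\textbf{n}})(\hat{\nabla}_XY-\hat{\nabla}_YX)=0$ and concludes $[X,Y]\in\ker(\lambda I-A_{\textbf{n}})=V_\lambda$ from torsion-freeness alone, bypassing the metric, the Cartan tensor and the Landsberg curvature entirely; it also makes transparent that integrability of $V_\lambda$ does not depend on the anisotropy of the induced connection. The paper's version has the mild side benefit of recording the explicit formula $\hat g(\hat{\nabla}_XY,Z)=2C_{\textbf{n}}(A_{\textbf{n}}X,Y,Z)+2L_{\textbf{n}}(X,Y,Z)$, though that identity is not reused later. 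Both proofs rely on the constancy of the principal curvatures, which you state explicitly and the paper assumes in the setup preceding the lemma.
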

\proof Let $X$ and $Y$ be in $V_{\lambda}$ and $Z \in V_{\mu}$ for $\mu \neq \lambda$. \\
(1). By the Codazzi equation (\ref{1.15}) and (\ref{3.5}),
\begin{eqnarray}
0&=&\hat g(({ \hat{\nabla}}_X{A}_{\textbf{n}})Z-({ \hat{\nabla}}_Z{A}_{\textbf{n}})X, Y)\nonumber\\
\label{3.6}&=&(\mu-\lambda)\hat g({ \hat{\nabla}}_XZ, Y)\\
&=&(\mu-\lambda)(X\hat g(Z, Y)-\hat g({ \hat{\nabla}}_XY, Z)+2C_{\textbf{n}}({A}_{\textbf{n}}X,Y,Z)+2L_{\textbf{n}}(X,Y,Z))\nonumber\\
&=&(\lambda-\mu)\hat g({ \hat{\nabla}}_XY, Z)-(\lambda-\mu)(2C_{\textbf{n}}({A}_{\textbf{n}}X,Y,Z)+2L_{\textbf{n}}(X,Y,Z)).\label{3.7}
\end{eqnarray}
Thus
\begin{eqnarray}\label{3.8}
\hat g({ \hat{\nabla}}_XY, Z)=2C_{\textbf{n}}({A}_{\textbf{n}}X,Y,Z)+2L_{\textbf{n}}(X,Y,Z).
\end{eqnarray}
Similarly,\\
\begin{eqnarray}\label{3.9}
\hat g({ \hat{\nabla}}_YX, Z)=2C_{\textbf{n}}(X,{A}_{\textbf{n}}Y,Z)+2L_{\textbf{n}}(X,Y,Z).
\end{eqnarray}
So we have $\hat g({ \hat{\nabla}}_YX-{ \hat{\nabla}}_XY, Z) = 0$, that is $\hat g([X, Y], Z) = 0$. Therefore $[X, Y] \in V_{\lambda}$.\\
(2). From (\ref{3.6}), we know $\hat g({ \hat{\nabla}}_XZ, Y ) = 0$ and thus ${ \hat{\nabla}}_XZ \perp  V_{\lambda}$.
\endproof

\subsection{ Focal submanifold}
For an isoparametric hypersurface in $N(c)$, the exponential map $E$ is smooth and the multiplicity of every principal curvature is constant. It follows from Lemma \ref{thm1} that the rank of $dE$ is constant. And because the principal foliations is also integrable, as in the Riemannian case, we can give a manifold structure to a sheet of the focal set, which is called \emph{focal submanifold}.

Let $M$ be a connected, oriented isoparametric hypersurface in $N (c)$ with unit normal vector field $\textbf{n}$ and $g$ distinct constant principal curvatures, say
\begin{align*}
\lambda_{1},~\lambda_{2},~\cdots, ~\lambda_{g}.
\end{align*}
We denote the multiplicity and the corresponding principal foliation of $\lambda_{i}$ by $m_{i}$ and $V_{i}$, respectively,  then $m_{1}+m_{2}+\cdots+m_{g}=m=n-1$. From Lemma \ref{thm1} and (\ref{2.26}), if $S$ is a focal submanifold  of~$M$, then there exists $s_{i}=s(\lambda_{i}, c)$ such that \begin{align}\lambda_{i}=\frac{\mathfrak{s}_{c}'(s_i)}{\mathfrak{s}_{c}(s_i)},
~~~~\phi_{s_i}M=S,~~~~\phi_{s_i*}V_{i}=0,\label{3.17}\end{align}where $\mathfrak{s}_{c}(s)$ is defined by (\ref{2.2}). Denote by $S_{i}=\phi_{s_{i}}M$ the focal submanifold of~$M$ corresponding to $\lambda_{i}$. The principal curvatures of
a focal submanifold can be expressed in analogous forms as in the Riemannian case.
\begin{lemm} \label{lem4.2} Let $M$ be a connected isoparametric hypersurface in $N (c)$ and $S_{i}$ be a focal submanifold of~$M$. Then for every unit normal vector $\eta$ at any $p\in S_{i}$, the shape operator $A_{\eta}$ has principal curvatures $\frac{c+\lambda_{i}\lambda_{j}}{\lambda_{i}-\lambda_{j}}$ with multiplicities $m_{j}$ and corresponding principal vectors $\phi_{s_{i}*}X,$ where~$X\in V_{j}(x)$ and $j\neq i, 1\leq i,j\leq g.$
\end{lemm}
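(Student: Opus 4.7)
Plan: Fix $x\in M$ with $p=\phi_{s_i}(x)\in S_i$ and let $\gamma(s)=E(x,s\mathbf{n})$. I would first handle the distinguished normal $\eta_0:=T(s_i)=\dot\gamma(s_i)$ by recycling the Jacobi-field variation of Lemma \ref{thm2} at the focal parameter, and then extend to arbitrary unit normals.

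For $\eta_0$: any tangent vector to $S_i$ at $p$ is $\phi_{s_i*}X$ with $X\in V_j(x)$, $j\neq i$, which by (\ref{2.26}) is a nonzero multiple of the parallel vector field $E_1(s_i)$; since $E_1$ is $g_T$-orthogonal to $T$ along $\gamma$ one gets $g_{\eta_0}(\eta_0,\phi_{s_i*}X)=0$, so $\eta_0\in\mathcal{N}^0_pS_i$. For such $X$, equations (\ref{2.22}) and (\ref{2.26}) combine to give
$$A_{\eta_0}(\phi_{s_i*}X)=\frac{\lambda_j\mathfrak{s}_c'(s_i)-\mathfrak{s}_c''(s_i)}{\mathfrak{s}_c'(s_i)-\lambda_j\mathfrak{s}_c(s_i)}\,\phi_{s_i*}X.$$
The Jacobi-type identity $\mathfrak{s}_c''=-c\,\mathfrak{s}_c$ together with the focal relation $\lambda_i=\mathfrak{s}_c'(s_i)/\mathfrak{s}_c(s_i)$ from (\ref{3.17}) reduce this ratio, after dividing numerator and denominator by $\mathfrak{s}_c(s_i)$, to $(c+\lambda_i\lambda_j)/(\lambda_i-\lambda_j)$. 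Injectivity of $\phi_{s_i*}|_{V_j(x)}$ (the denominator is nonzero for $j\neq i$) yields multiplicity $m_j$, and $\sum_{j\neq i}m_j=n-1-m_i=\dim S_i$ exhausts $T_pS_i$.

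To promote the result from $\eta_0$ to an arbitrary unit normal $\eta\in\mathcal{N}^0_pS_i$, I would use that the fibre $L_i:=\phi_{s_i}^{-1}(p)\cap M$ is a leaf of the principal foliation $V_i$ and so has dimension $m_i$. The assignment $x'\mapsto T_{x'}(s_i):=\dot\gamma_{x'}(s_i)$ parametrizes an $m_i$-dimensional family of unit normals to $S_i$ at $p$, which, combined with the analogous family obtained from $\mathbf{n}_-$ (equivalently, from the reverse metric $\overleftarrow{F}$ via Lemma \ref{lem4.01}), should cover the unit indicatrix $\mathcal{N}^0_pS_i$. For each such $\eta$, the preceding variational computation repeats verbatim with $(x',\mathbf{n}(x'))$ in place of $(x,\mathbf{n})$ and delivers the same eigenvalues $(c+\lambda_i\lambda_j)/(\lambda_i-\lambda_j)$ with multiplicities $m_j$ and principal vectors $\phi_{s_i*}X$, $X\in V_j(x')$.

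The main obstacle is precisely this surjectivity: verifying that every unit normal to $S_i$ at $p$ really is realized as $T_{x'}(s_i)$ for some $x'\in L_i$, possibly after passing through the reverse-metric construction. In the Riemannian case this is routine from Gauss' lemma and the rank count of the normal exponential map near focal directions, but the Finsler asymmetry $\eta_-\neq-\eta$ means the two sheets of the unit indicatrix must be handled separately, and the Legendre transformation hidden in the definition of $\mathcal{N}^0S_i$ has to be tracked carefully through the Weingarten identity (\ref{0.1}). Granted this, the lemma follows.
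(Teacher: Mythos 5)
Your eigenvalue computation for the distinguished normal $\eta_0=\dot\gamma(s_i)$ coincides with the paper's: both arguments reuse the Jacobi-field identities (\ref{2.26}) and (\ref{2.22}) at the focal parameter together with $\mathfrak{s}_c''=-c\,\mathfrak{s}_c$ and $\lambda_i=\mathfrak{s}_c'(s_i)/\mathfrak{s}_c(s_i)$ to reduce the ratio to $(c+\lambda_i\lambda_j)/(\lambda_i-\lambda_j)$ on $\phi_{s_i*}V_j$, and the multiplicity count $\sum_{j\neq i}m_j=\dim S_i$ is the same. The genuine gap is exactly the step you flag as ``the main obstacle'': showing that \emph{every} unit normal $\eta\in\mathcal{N}^0_pS_i$ is realized as the terminal velocity of a normal geodesic of $M$. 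Your dimension count over the leaf $L_i=\phi_{s_i}^{-1}(p)$ only exhibits an $m_i$-parameter family inside the $m_i$-dimensional indicatrix $\mathcal{N}^0_pS_i$; equal dimensions do not give surjectivity, and you explicitly leave the claim as a hypothesis (``Granted this, the lemma follows''). As written, the proof is therefore incomplete.

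The paper closes this gap with the reverse-metric device you only gesture at. By Lemma \ref{lem4.01} and Lemma \ref{thm2}, $M$ is the tube of radius $s_i$ over $S_i$ in $(N,\overleftarrow{F})$ with respect to $-\mathbf{n}$, so the map $\overleftarrow{\phi}_{s_i}$ is defined on the \emph{entire} unit normal bundle $\overleftarrow{\mathcal{N}^0}S_i$ and lands in $M$. Given any $F$-unit normal $\eta$ at $p$, the vector $-\eta$ is an $\overleftarrow{F}$-unit normal (since $\overleftarrow{g}_{-\eta}(-\eta,X)=-g_{\eta}(\eta,X)=0$ and $\overleftarrow{F}(-\eta)=F(\eta)=1$), hence $x:=\overleftarrow{\phi}_{s_i}(p,-\eta)$ is a point of $M$ with $p=\phi_{s_i}(x)$ and with the normal geodesic from $x$ arriving at $p$ with velocity $\eta$; the variational computation then applies verbatim at this $x$, yielding (\ref{3.18}) and hence the stated eigenvalues for this arbitrary $\eta$. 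In short: instead of trying to sweep out the indicatrix by the fibre $L_i$ and its $\mathbf{n}_-$ counterpart, reverse the metric so that the surjectivity becomes a built-in feature of the tube structure. With that substitution your argument is complete; without it, the lemma is only proved for the particular normals arising from the chosen orientation of $M$.
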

\proof Let $\textbf{n}$ be the unit normal vector field of $M$. From Lemma \ref{lem4.01} and Lemma \ref{thm2}, we know that in $(N,\overleftarrow{F})$, $M$ is the tube over $S_{i}$ with respect to $-\textbf{n}$. Then for any unit normal vector $\eta$ at  $p\in S_{i}$ with respect to $F$,  $-\eta$  is a unit normal vector at $p\in S_{i}$ with respect to $\overleftarrow{F}$. Moreover, there exists a point $x\in M$ such that $x=\overleftarrow{\phi}_{s_{i}}(p,-\eta)$, where $\overleftarrow{\phi}_{s_{i}}: \overleftarrow{\mathcal{N}^0} S_{i}\rightarrow N$ is defined by
(\ref{2.17}) with respect to $\overleftarrow{F}$. Thus $p=\phi_{s_{i}}(x)$.
By the proof of Lemma \ref{thm2}, we know that even at the focal point of $(M, x)$, if $\mathfrak{s}_{c}^{'}(s_i)- \lambda_j \mathfrak{s}_{c}(s_i)\neq 0$, we still have
\begin{align}
D_{\phi_{s*}X}^{\eta}\eta=&\frac{\mathfrak{s}_{c}''(s_{i})- \lambda_{j} \mathfrak{s}_{c}'(s_{i})}{\mathfrak{s}_{c}^{'}(s_{i})-\lambda_{j} \mathfrak{s}_{c}(s_{i})}\phi_{s_i*}X,~~\forall X\in V_{j}(x).\label{3.18}
\end{align}
Hence by Weingarten formula (\ref{0.1}) and (\ref{3.18}), \begin{align}A_{\eta}(\phi_{s*}X)&=-\left[D_{\phi_{s*}X}^{\eta}\eta\right]^{\top}_{g_\eta}\nonumber\\
&=-D_{\phi_{s*}X}^{\eta}\eta\label{2.202}\\
&=\frac{-\mathfrak{s}_{c}''(s_{i})+ \lambda_{j} \mathfrak{s}_{c}'(s_{i})}{\mathfrak{s}_{c}^{'}(s_{i})- \lambda_{j} \mathfrak{s}_{c}(s_{i})}\phi_{s_i*}X,~~\forall X\in V_{j}(x),~~j\neq i,\nonumber\end{align}
where $1\leq i,j\leq g.$
The desired conclusion follows by (\ref{3.17}), (\ref{3.18}) and  (\ref{2.2}).
\endproof

\begin{theo} \label{thm3} Let $M\hookrightarrow N (c)$ be a connected isoparametric hypersurface. If the first class reversible torsion of $N (c)$ vanishes, then each focal submanifold of~$M$ is an anisotropic-minimal submanifold in $N (c)$.
\end{theo}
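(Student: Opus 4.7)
The plan is to combine the Jacobi-field calculation in the proof of Lemma \ref{lem4.2} with the vanishing reversible torsion hypothesis to deduce $A_{\eta_-}=-A_{\eta}$ on $T_pS_i$, and then to contrast this with the (unsigned) eigenvalue information from Lemma \ref{lem4.2} itself. The crucial observation, implicit in Lemma \ref{lem4.2} but not recorded in its statement, is that the covariant derivative $D^\eta_X\eta$ produced by its proof is not merely equal to $-A_\eta X$ modulo a normal component, but genuinely lies in $T_pS_i$.

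To make this precise, I would fix $p\in S_i$ and a unit normal $\eta$ at $p$, equip $\eta$ with the geodesic-family extension used in Lemma \ref{lem4.2}, and read off
$$D^\eta_{\phi_{s_i*}X}\eta=\frac{\mathfrak{s}_c''(s_i)-\lambda_j\mathfrak{s}_c'(s_i)}{\mathfrak{s}_c'(s_i)-\lambda_j\mathfrak{s}_c(s_i)}\,\phi_{s_i*}X\in T_pS_i$$
for every $X\in V_j(x)$ with $j\ne i$, which gives $D^\eta_{\phi_{s_i*}X}\eta=-A_\eta(\phi_{s_i*}X)$ as a \emph{vector} identity in $T_pN$, not merely up to projection. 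Since $T_pS_i=\bigoplus_{j\ne i}\phi_{s_i*}V_j(x)$, linearity extends this to $D^\eta_X\eta=-A_\eta X\in T_pS_i$ for every $X\in T_pS_i$. Such $X$ automatically satisfies $g_\eta(\eta,X)=0$, so the hypothesis $T_\eta(X)=0$ yields
$$D^{\eta_-}_X\eta_-=-D^\eta_X\eta=A_\eta X\in T_pS_i.$$
Thus $D^{\eta_-}_X\eta_-$ is tangent to $S_i$, its $g_{\eta_-}$-tangential projection is itself, and the Weingarten formula forces $A_{\eta_-}X=-D^{\eta_-}_X\eta_-=-A_\eta X$. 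Hence $A_{\eta_-}=-A_\eta$ on $T_pS_i$, and $\hat H_{\eta_-}=-\hat H_\eta$.

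To conclude I would apply Lemma \ref{lem4.2} a second time, now to the unit normal $\eta_-$: it delivers the same list of eigenvalues $\frac{c+\lambda_i\lambda_j}{\lambda_i-\lambda_j}$ with multiplicities $m_j$ (the principal vectors coming from the different base point $x_-\in M$ that maps to $p$ along the $\eta_-$-geodesic in the reverse metric), so $\hat H_{\eta_-}=\hat H_\eta$. Comparing with $\hat H_{\eta_-}=-\hat H_\eta$ from the torsion step forces $\hat H_\eta=0$, i.e.\ $S_i$ is anisotropic-minimal; since $p$ and $\eta$ were arbitrary, the conclusion holds with respect to either unit normal. The delicate point is the tangency of $D^\eta_X\eta$ for the chosen extension: if the Jacobi-field identity only held after truncating the $g_\eta$-normal part, the reversible torsion condition would relate $A_\eta$ and $A_{\eta_-}$ only modulo a possibly nontrivial Cartan/Landsberg-type defect, which would obstruct the trace comparison.
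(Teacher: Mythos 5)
Your proposal is correct and follows essentially the same route as the paper: the paper likewise combines Lemma \ref{lem4.2} applied to both $\eta$ and $\eta_-$ (giving equal traces) with the vanishing reversible torsion identity $D^{\eta_-}_X\eta_-=-D^\eta_X\eta$ (giving $A_{\eta_-}=-A_\eta$), and concludes $\mathrm{tr}\,A_\eta=0$. The tangency point you flag as delicate is exactly what the paper's equation (\ref{2.202}) records, namely that $D^\eta_{\phi_{s_i*}X}\eta$ is already tangent to $S_i$ so the $g_\eta$-projection is superfluous.
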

\proof
Let $\eta$ be a unit normal vector to a focal submanifold $S_{0}$ of $M$. Then $\eta_{-}$ is also a unit normal vector to $S_{0}$ and there exist two points $x_1,x_2\in M$ such that $p=\phi_{s_{i}}(x_1)=\phi_{s_{i}}(x_2)$ and $\eta=\phi_{s_{i}*}\textbf{n}(x_1), \eta_{-}=\phi_{s_{i}*}\textbf{n}(x_2)$. By Lemma \ref{lem4.2}, the shape operators $A_{\eta}$ and $A_{\eta_{-}}$ have the same eigenvalues with the same multiplicities. So
$$\text{tr} A_{\eta}=\text{tr} A_{\eta_{-}}.$$
 If $\tau_{\eta}(X)=D^{\eta}_{X}\eta+D^{\eta_{-}}_{X}\eta_{-}=0,$
 from (\ref{3.18}) and (\ref{2.202}), for any eigenvector $X$ of $A_{\eta_{-}}$, we have $$A_{\eta_{-}}X=-\left[D_{X}^{\eta_-}\eta_-\right]^{\top}_{g_{\eta_-}}=-D^{\eta_{-}}_{X}\eta_{-}
=D^{\eta}_{X}\eta=\left[D_{\phi_{s*}X}^{\eta}\eta\right]^{\top}_{g_\eta}
=-A_{\eta}X.$$ That is, the principal curvatures of $A_{\eta_{-}}$ and $A_{\eta}$ are all actually opposite. Hence $\text{tr}  A_{\eta}=0$. Since this is true for all unit normal vectors $\eta$, we conclude that $S_{0}$ is an anisotropic-minimal submanifold in $N (c)$.
\endproof

\subsection{ Proof of Theorem \ref{thm00}}
\proof
(1)~ Let $\nu=\mathcal L\eta$,  $\{e_{a}\},a=1, 2, \ldots, m=n-1-m_i$ (resp. $\{\bar{e}_{a}\}$) be the orthonormal principal vectors of $S_i$ with respect to $g_{\eta}$ (resp. $g_{\eta_{-}}$) and ${\mu}_{a}$ (resp. $\bar{\mu}_{a}$) be the corresponding principal curvatures. We have
\begin{equation*}\aligned
0
&=e_{b}(g_{\eta}(\eta, e_{a}))=g_{\eta}(\nabla^{\eta}_{e_{b}}\eta, e_{a})+g_{\eta}(\eta, \nabla^{\eta}_{e_{b}}e_{a})=g_{\eta}(\nabla^{\eta_-}_{e_{b}}\eta, e_{a})+g_{\eta}(\eta, \nabla^{\eta_-}_{e_{b}}e_{a}).
\endaligned\end{equation*}
If $\tau'_{\eta}=0,
$ then $\nabla^{\eta}_{e_{b}}\eta=\nabla^{\eta_-}_{e_{b}}\eta.$ Thus
\begin{equation*}
g_{\eta}(\eta, \nabla^{\eta}_{e_{b}}e_{a})=g_{\eta}(\eta, \nabla^{\eta_-}_{e_{b}}e_{a})
=\nu(\nabla^{\eta_-}_{e_{b}}e_{a})=-F^{*}(-\nu)g_{\eta_-}(\eta_-, \nabla^{\eta_-}_{e_{b}}e_{a}).
\end{equation*}
 That is
\begin{equation}\label{c2}
g_{\eta_{-}}(\nabla^{\eta_{-}}_{ e_{b}}\eta_{-}, e_{a})
=-\frac{1}{F^{*}(-\nu)}g_{\eta}(\nabla^{\eta}_{ e_{b}}\eta,  e_{a}).
\end{equation}
Set $e_{a}=u^{b}_{a}\bar{e}_{b}$. By the Weingarten formula on $S_i$, we obtain
\begin{equation}\aligned\label{c12}
g_{\eta}(\nabla^{\eta}_{e_{b}}\eta, e_{a})
&=g_{\eta}(-A_{\eta}e_{b}, e_{a})=-{\mu}_{b}\delta_{ab},\\
g_{\eta_{-}}(\nabla^{\eta_{-}}_{e_{b}}\eta_{-}, e_{a})&=u^{c}_{b}u^{d}_{a} g_{\eta_{-}}(\nabla^{n_{-}}_{\bar{e}_{c}}\eta_{-}, \bar{e}_{d})=-\bar{\mu}_{c}u^{c}_{b}u^{d}_{a}\delta_{cd}.
\endaligned\end{equation}
Plugging \eqref{c12} into \eqref{c2} yields
\begin{equation*}
-{\mu}_{a}=F^{*}(-\nu)\sum\limits_{b}(u^{b}_{a})^{2}\bar{\mu}_{b}.
\end{equation*}
From Lemma \ref{lem4.2}, we know that ${\mu}_{a}$ and $\bar{\mu}_{a}$ may be different, but $\{{\mu}_{a}\}$ and $\{\bar{\mu}_{a}\}$ are the same set. Then
\begin{equation*}
-\sum\limits_{b}{\bar\mu}_{b}=-\sum\limits_{a}{\mu}_{a}=F^{*}(-\nu)\sum\limits_{a,b}(u^{b}_{a})^{2}\bar{\mu}_{b}
=F^{*}(-\nu)\sum\limits_{b}\bar{\mu}_{b}\left[\sum\limits_{a}(u^{b}_{a})^{2}\right].
\end{equation*}
So
\begin{equation}\label{c5}
\sum\limits_{b}{\bar\mu}_{b}\left[F^{*}(-\nu)\sum\limits_{a}(u^{b}_{a})^{2}+1\right]=0.
\end{equation}
It is obvious that $F^{*}(-\nu)\sum\limits_{a}(u^{b}_{a})^{2}+1\geq1$.
\eqref{c5} holds at a given point. From Lemma \ref{lem4.2} and \eqref{c5}, we have \eqref{c1}.

(2)~If $\tau_{\eta}=0,$ then $\text{tr}A_{\eta}$ is zero for each $i$ and  each unit normal vector $\eta$ on the focal submanifold $S_{i}$ by Theorem \ref{thm3}. It follows that \begin{equation}\label{c20}0=\text{tr}  A_{\eta}=\sum_{j\neq i}m_{j}\frac{c+\lambda_{i}\lambda_{j}}{\lambda_{i}-\lambda_{j}}.\end{equation}
The last conclusion of Theorem \ref{thm00} can be obtained immediately from Theorem \ref{thm3} and \eqref{c20}.
\endproof
\subsection{Proof of Theorem \ref{thm0}}
\proof
As in Riemannian geometry, we also divide the proof into two situations.\\
\textbf{Case I}: $\textbf{c}\leq \textbf{0}$.

From general Cartan formula \eqref{c1}, we can get the results as in \cite{DH}.
\\
\textbf{Case II}: $\textbf{c} > \textbf{0}$ and $\tau=0$.

By $(\ref{3.17}$) and $(\ref{2.2}$), we suppose that $M$ has $g$ distinct principal curvatures $\lambda_{i}=\frac{\mathfrak{s}_{c}'(s_i)}{\mathfrak{s}_{c}(s_i)}=\sqrt{c} \cot\theta_{i}$, $0<\theta_{1}<\cdots< \theta_{g}< \pi$, with respective multiplicities $m_{i}$ and $\theta_{i}=\sqrt{c}s_{i}$ (or $\theta_{i}=\sqrt{c}s_{i}+\pi$). From Cartan-type formula \eqref{c20}, we can get the results as in the Riemannian case ([7,Theorem 3.26]).
\endproof
\begin{remark} Unlike those on the Euclidean sphere $\mathbb S^n$, the  geodesics in a Finsler space form with positive constant flag curvature are not necessarily closed. So the number of the focal points of $(M,x)$ along every normal geodesic to $M$  through $x$ may be more than $2g$.  In general, if $M$ is an isoparametric hypersurface of $(N,F)$ with respect to the normal vector $\textbf{n}$, it is not necessarily isoparametric with respect to the normal vector $\textbf{n}_-$. But that is true when~$\tau=0$. There are probably two different normal geodesics through $x\in M$ and each of them has the characteristics described in Theorem \ref{thm0}.
\end{remark}
\section{ Reversible torsion of Randers spaces}
Let $(N,F,d\mu_{\text{BH}})$ be an $n-$dimensional Randers space with BH-volume form and let its navigation expression be
$$F=\frac{\sqrt{\lambda h^2+W_{0}^2}-W_{0}}{\lambda}=\frac{\sqrt{\lambda h_{ij}y^iy^j+(W_{i}y^i)^2}-W_{i}y^i}{\lambda},$$
where $\lambda=1-\|W\|_h^2, W=W^i\frac{\partial}{\partial x^i}, W_{i}=h_{ij}W^{j}$.
Let $h^*$ be the dual metric of $h$. By [13, p.39-40], the dual metric $F^*$ can be expressed as
\begin{align}
F^*=&h^*+\xi(W)=\sqrt{h^{ij}\xi_i\xi_j}+W^{i}\xi_i,\quad \xi=\xi_idx^i \in T^*_x N.\label{4.1} \end{align}
Denote
$$s_{ij}=\frac{1}{2}(W_{i|j}- W_{j|i}),~~~s_{j}=W^{i}s_{ij},~~~s^{i}=h^{ij}s_{j},$$
$$s^{i}_{j}=h^{ik}s_{kj},~~~s^{i}_{0}=s^{i}_{j}y^{j},$$
where ¡°$|$¡± denotes the covariant differential with respect to $h$. From [16, Theorem 5.10], $F$ has vanishing $\mathbf{S}$-curvature if and only if $W$ is a Killing vector field. In this case, we have
\begin{align}
W_{i|j}&=-W_{j|i},~~s_{ij}=W_{i|j},~~s^{i}_{j}=W^{i}_{|j},\nonumber\\
G^{i}&=\bar{G}^{i}-Fs^{i}_{0}-\frac{1}{2}F^{2}s^{i},\label{3.25}
\end{align}
where $G^{i}$ and $\bar{G}^{i}$ are the geodesic coefficients of $F$ and $h$, respectively. Then
\begin{align}
N^{i}_{j}&=\frac{\partial G^{i}}{\partial y^{j}}=\bar{N}^{i}_{j}-F_{y^{j}}s^{i}_{0}-Fs^{i}_{j}-FF_{y^{j}}s^{i},\label{3.26}
\end{align}
where $\bar{N}^{i}_{j}=\frac{\partial \bar G^{i}}{\partial y^{j}}=\bar \Gamma^i_{jk}(x)y^k$ and $\bar \Gamma^i_{jk}$'s
 are the Levi-Civita connection coefficients of $h$.
\begin{lemm} \label{lem6.0} Let $(N,F,d\mu_{\text{BH}})$ be a Randers space with navigation data $(h,W)$. If its $\mathbf{S}$-curvature  vanishes, or equivalently $W$ is a Killing vector, then its reversible torsion vanishes.
\end{lemm}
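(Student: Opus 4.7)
The plan is to exploit the navigation interpretation to reduce the reversible torsion to a Riemannian computation in $h$. Set $\bar\eta := \eta - W$. Squaring the navigation identity for $F$ gives $h(y,y) = \lambda F(y)^2 + 2F(y)h(W,y)$, so $\bar\eta$ is $h$-unit whenever $\eta$ is $F$-unit. My first task is to identify $\eta_-$ explicitly. Differentiating $F$ in $y^i$ and simplifying using $F(\eta)=1$, one obtains
\begin{align*}
F_{y^i}(\eta) = \frac{\bar\eta_i}{1+h(W,\bar\eta)},\qquad \bar\eta_i = h_{ij}\bar\eta^j,
\end{align*}
so that $\mathcal L\eta$ is a positive scalar multiple of the $h$-flat of $\bar\eta$. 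Applying the same formula to the $F$-unit vector $\eta' := 2W - \eta$ (whose $h$-unit partner is $-\bar\eta$) yields $\mathcal L\eta' \propto_+ -\mathcal L\eta$. Since $\mathcal L$ is a bijection preserving positive rays and $\eta_-$ is by definition normalized to be $F$-unit, this forces
\begin{align*}
\eta_- = 2W - \eta, \qquad \eta_- - W = -\bar\eta.
\end{align*}

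Next I would note that the formula for $F_{y^i}(\eta)$ gives $g_\eta(\eta,X) = F_{y^i}(\eta)X^i = h(\bar\eta,X)/(1+h(W,\bar\eta))$, and the analogous identity at $\eta_-$ gives $g_{\eta_-}(\eta_-,X) = -h(\bar\eta,X)/(1-h(W,\bar\eta))$. Hence the hypothesis $g_\eta(\eta,X)=0$ is equivalent to $h(\bar\eta,X)=0$ and in particular forces $g_{\eta_-}(\eta_-,X) = 0$, so that $F_{y^j}(\eta_-)X^j$ vanishes as well. I then reduce the Berwald covariant derivative to that of $h$: differentiating (\ref{3.26}) in $y^k$ to produce $\Gamma^i_{jk}$, contracting with $X^j\eta^k$, and using the homogeneity identities $F_{y^k}(\eta)\eta^k = 1$, $F_{y^jy^k}(\eta)\eta^k = 0$ together with $F_{y^j}(\eta)X^j = 0$, every correction term collapses except one, leaving
\begin{align*}
\Gamma^i_{jk}(\eta)\,X^j\eta^k = \bar\Gamma^i_{jk}\,X^j\eta^k - s^i_j X^j,
\end{align*}
where $\bar\Gamma^i_{jk}$ are the Christoffel symbols of $h$. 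By the Killing identity $s^i_j = W^i_{|j}$ from (\ref{3.25}), this rearranges to $D^\eta_X\eta = \bar D_X\eta - \bar D_X W = \bar D_X\bar\eta$, with $\bar D$ the Levi-Civita connection of $h$.

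The identical argument applied at $\eta_-$ (legitimate because $g_{\eta_-}(\eta_-,X)=0$) yields $D^{\eta_-}_X\eta_- = \bar D_X(\eta_- - W) = -\bar D_X\bar\eta$, and adding the two gives $T_\eta(X) = 0$. I expect the main obstacle to be the first step, the explicit identification of $\eta_-$: it requires computing $\mathcal L\eta$ in the navigation variables and carefully inverting $-\mathcal L\eta$ through $F^*$. Once the clean identity $\eta_- - W = -(\eta - W)$ is in hand, the Killing hypothesis does all the remaining work in a single stroke by collapsing the Berwald connection to the Levi-Civita connection of $h$ on the relevant directions.
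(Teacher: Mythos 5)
Your proposal is correct and follows essentially the same route as the paper: both identify $\eta_-=2W-\eta$ from the navigation/dual-metric description of the Legendre transform, use $F_{y^j}(\eta_\pm)X^j=0$ to kill the correction terms in $N^i_j$ from (\ref{3.26}), and invoke the Killing identity $s^i_j=W^i_{|j}$ to produce the cancellation. The only difference is cosmetic: you package the computation as $D^{\eta_\pm}_X\eta_\pm=\pm\bar D_X\bar\eta$ with $\bar\eta=\eta-W$, whereas the paper adds the two covariant derivatives directly and observes $2(W^i_{|j}-s^i_j)=0$.
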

\proof For $x\in N$, let $\eta$ be a unit vector field in the neighbourhood of $x$ and let $X\in T_{x}N$  satisfy $g_{\eta}(\eta,X)=0$. If $\xi=\mathcal L \eta$, then $\xi(X)=0$. That is, $F_{y^{j}}(\eta)X^j=0$ and $F_{y^{j}}(\eta_{-})X^j=0$. Set $\eta_{+}=\eta$, from (\ref{Z01}) and (\ref{4.1}), we know that
$$\eta_{-}=\frac{{\mathcal L}^{-1}(-{\mathcal L}\eta)}{F^{*}(-{\mathcal L}\eta)}=\frac{{\mathcal L}^{-1}(-\xi)}{F^{*}(-\xi)},~~~~\eta_{\pm}^i=F^{*}_{\xi_i}(\pm\xi)=\frac{\pm h^{ij}\xi_j}{h^{*}(\xi)}+W^i.
$$
Then by (\ref{3.19}), (\ref{3.25}) and (\ref{3.26}), we have
\begin{align*}
\tau_{\eta}(X)&=D^{\eta}_{X}\eta+D^{\eta_{-}}_{X}\eta_{-}\\
&=(\eta^{i}_{x^{j}}+N^{i}_{j}(\eta)+[\eta_{-}^{i}]_{x^{j}}+N^{i}_{j}(\eta_{-}))X^{j}\frac{\partial}{\partial x^{i}}\nonumber\\
&=(2W^{i}_{x^{j}}+\bar N^{i}_{j}(\eta)+\bar N^{i}_{j}(\eta_{-})-2s^i_j)X^{j}\frac{\partial}{\partial x^{i}}\nonumber\\
&=(2W^{i}_{x^{j}}+\bar \Gamma^i_{jk}(\eta^k+\eta_{-}^k)-2s^i_j)X^{j}\frac{\partial}{\partial x^{i}}\nonumber\\
&=(2W^{i}_{x^{j}}+2\bar \Gamma^i_{jk}W^k-2s^i_j)X^{j}\frac{\partial}{\partial x^{i}}\nonumber\\
&=2(W^{i}_{|j}-s^i_j)X^{j}\frac{\partial}{\partial x^{i}}\nonumber\\
&=0.
\end{align*}
\endproof

\begin{remark} From \cite{BRS} and \cite{SS}, the Randers space $(N,F,d\mu_{\text{BH}})$ has constant flag curvature $c$ if and only if the Riemannian space $(N, h)$ has constant sectional curvature $\bar c$ and $W$ is a homothetic vector field. In this case, $F$ has constant $\mathbf{S}$-curvature $c'$ and similarly we can prove that $\tau_{\eta}(X)=2c'X.$ So $(N,F,d\mu_{\text{BH}})$ has vanishing reversible torsion iff $c'=0$. If $\bar c\neq0$, then $c'=0$ and $c=\bar c$. That is, $(N,F,d\mu_{\text{BH}})$ must have vanishing $\mathbf{S}$-curvature. If $\bar c=0$, there are also many nontrivial Randers space forms with vanishing $\mathbf{S}$-curvature besides Minkowski spaces.
\end{remark}

\small 

Qun He \\
School of Mathematical Sciences, Tongji University, Shanghai, 200092, China\\
E-mail: hequn@tongji.edu.cn\\

Yali Chen\\
School of Mathematical Sciences, Tongji University, Shanghai, 200092, China\\
E-mail: chenyl90@tongji.edu\\

Songting Yin\\
Department of Mathematics and Computer Science, Tongling University, Tongling 244000,
China\\
E-mail: yst419@163.com\\

Tingting Ren\\
School of Mathematical Sciences, Tongji University, Shanghai, 200092, China\\
E-mail: 1531938@tongji.edu.cn

\end{document}